\newtheorem{theorem}{Theorem}[section]
\newtheorem*{theorem*}{Theorem}
\newtheorem{lemma}[theorem]{Lemma}
\newtheorem{corollary}[theorem]{Corollary}
\newtheorem{proposition}[theorem]{Proposition}
\theoremstyle{definition}
\newtheorem{definition}[theorem]{Definition}
\newtheorem{example}[theorem]{Example}
\theoremstyle{remark}
\newtheorem{remark}[theorem]{Remark}
\DeclareMathOperator{\thick}{thick}
\DeclareMathOperator{\Hom}{Hom}
\DeclareMathOperator{\Ext}{Ext}
\DeclareMathOperator*{\Mod}{\!-\mathsf{Mod}}
\DeclareMathOperator*{\proj}{\!-\mathsf{proj}}
\DeclareMathOperator*{\Proj}{\!-\mathsf{Proj}}
\DeclareMathOperator*{\Inj}{\!-\mathsf{Inj}}
\newcommand{\Add}{\ensuremath{\mathsf{Add}}}
\newcommand{\Prod}{\ensuremath{\mathsf{Prod}}}
\newcommand{\Tcal}{\ensuremath{\mathcal{T}}}
\newcommand{\Fcal}{\ensuremath{\mathcal{F}}}
\newcommand{\Ccal}{\ensuremath{\mathcal{C}}}
\newcommand{\Acal}{\ensuremath{\mathcal{A}}}
\newcommand{\Wcal}{\ensuremath{\mathcal{W}}}
\newcommand{\Hcal}{\ensuremath{\mathcal{H}}}
\newcommand{\Scal}{\ensuremath{\mathcal{S}}}
\newcommand{\Ucal}{\ensuremath{\mathcal{U}}}
\newcommand{\Vcal}{\ensuremath{\mathcal{V}}}
\newcommand{\K}{\mathsf{K}}
\newcommand{\C}{\mathsf{C}}
\newcommand{\D}{\mathsf{D}}
\newcommand{\Kbb}{\mathbb{K}}
\newcommand{\ra}{\rightarrow}
\DeclareMathOperator{\Rep}{Rep}
\numberwithin{equation}{section}
\begin{document}
\title{Silting and cosilting classes in derived categories}
\author{Frederik Marks, Jorge Vit{\'o}ria}
\address{Frederik Marks, Institut f\"ur Algebra und Zahlentheorie, Universit\"at Stuttgart, Pfaffenwaldring 57, 70569 Stuttgart, Germany}
\email{marks@mathematik.uni-stuttgart.de}
\address{Jorge Vit\'oria, Department of Mathematics, City, University of London, Northampton Square, London EC1V 0HB, United Kingdom}
\email{jorge.vitoria@city.ac.uk}
\keywords{torsion pair, t-structure, co-t-structure, silting complex, cosilting complex, derived category}
\subjclass[2010]{16E35,18E30,18E40}
\thanks{The authors are grateful to Lidia Angeleri H\"ugel for numerous stimulating discussions on the ideas presented in this article. The first named author was supported by a grant within the DAAD P.R.I.M.E. program. The second named author acknowledges support from the Department of Computer Sciences of the University of Verona in the earlier part of this project, and from the Engineering and Physical Sciences Research Council of the United Kingdom, grant number EP/N016505/1, in the later part of this project. Finally, the authors acknowledge funding from the Project ``Ricerca di Base 2015'' of the University of Verona}
\maketitle

%%%%%%%%%%%%%%%%%%%%%%%%%%%%%%%%%%%%%%%%%%%%%%%%%%%%%%%%%%%

\begin{abstract}
An important result in tilting theory states that a class of modules over a ring is a tilting class if and only if it is the Ext-orthogonal class to a set of compact modules of bounded projective dimension. Moreover, cotilting classes are precisely the resolving and definable subcategories of the module category whose Ext-orthogonal class has bounded injective dimension.
 
In this article, we prove a derived counterpart of the statements above in the context of silting theory.
Silting and cosilting complexes in the derived category of a ring generalise tilting and cotilting modules. They give rise to subcategories of the derived category, called silting and cosilting classes, which are part of both a t-structure and a co-t-structure. We characterise these subcategories: silting classes are precisely those which are intermediate and Ext-orthogonal classes to a set of compact objects, and cosilting classes are precisely the cosuspended, definable and co-intermediate subcategories of the derived category.
\end{abstract}

%%%%%%%%%%%%%%%%%%%%%%%%%%%%%%%%%%%%%%%%%%%%%%%%%%%%%%%%%%%

\section{introduction}
Silting and cosilting complexes, as introduced in \cite{AMV1}, \cite{Wei0} and \cite{WeiZhang}, can be understood as derived analogues of tilting and cotilting modules. This paper reinforces such a perspective through the torsion pairs naturally associated to these complexes, namely t-structures and co-t-structures. 
To every silting or cosilting complex we associate both a t-structure and a co-t-structure which turn out to be adjacent, that is, the \textit{torsion} class of one of the pairs turns out to be the \textit{torsionfree} class of the other. Such triples of subcategories are instances of so-called torsion-torsionfree (TTF) triples and the subcategory that links the two pairs will be referred to as the corresponding silting, respectively cosilting, class. 
For silting complexes, these triples were first observed in \cite{KY} in the bounded derived category of a finite-dimensional algebra of finite global dimension, and they were further generalised in \cite{AMV1} to unbounded derived categories of rings. For cosilting complexes, we will prove in this paper the existence of such triples, using results from \cite{AMV3}, \cite{Bondarko} and \cite{WeiZhang}. 

Our main aim is to describe silting and cosilting classes in the derived category of a ring from a recurrent point of view in (classical) tilting theory. In a module category it is well known that a class of modules is:
\begin{itemize}
\item a tilting class if and only if it is the right Ext-orthogonal class to a set of compact modules of bounded projective dimension (see \cite{BS} and the references therein);
\item a cotilting class if and only if it is resolving, definable and such that the right Ext-orthogonal class to it has bounded injective dimension (see \cite[Theorem 6.1]{Ba1}).
\end{itemize}

The first statement is known as the finite-type characterisation of tilting classes and it can be restated by saying that there is a bijection between resolving subcategories of compact modules of bounded projective dimension and tilting modules up to equivalence (see also \cite{AHT}). Note that cotilting modules are generally not analogously determined by a set of compact modules. They are, however, always pure-injective (\cite{Ba0}, \cite{Stov}).

Our main theorem generalises the results above to (co)silting classes in the derived category of a ring.\\

\begin{theorem*}
Let $A$ be a ring, $\D(A)$ the derived category of $A\Mod$ and $\Vcal$ a full subcategory of $\D(A)$. 
\begin{enumerate}
\item $\Vcal$ is a silting class if and only if $\Vcal$ is intermediate and $\Vcal=\Scal^{\perp_{>0}}$ for a set $\Scal$ of compact objects.
\item $\Vcal$ is a cosilting class if and only if $\Vcal$ is cosuspended, co-intermediate and definable.
\end{enumerate}
Moreover, the first statement induces a bijection between co-intermediate and cosuspended subcategories of $\K^b(A\proj)$ and silting complexes up to equivalence.
\end{theorem*}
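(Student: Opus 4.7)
The plan is to construct mutually inverse maps between silting complexes (up to equivalence) and co-intermediate, cosuspended subcategories of $\K^b(A\proj)$, leaning on part (1) of the theorem for the main translation. On one side, to a silting complex $T$ I associate
\[
\Scal_T := {}^{\perp_{>0}}(T^{\perp_{>0}}) \cap \K^b(A\proj),
\]
which is the restriction to $\K^b(A\proj)$ of the co-aisle of the co-t-structure in $\D(A)$ attached to $T$ (discussed earlier in the introduction). It is cosuspended since it arises as a co-aisle, and co-intermediate because $T \in \K^b(A\proj)$ is itself bounded, which forces $\Scal_T$ to sit between two shifts of the standard co-t-structure on $\K^b(A\proj)$. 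Equivalent silting complexes yield the same silting class $T^{\perp_{>0}}$ and hence the same $\Scal_T$, so the map is well-defined.

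Conversely, given a cosuspended and co-intermediate subcategory $\Scal$ of $\K^b(A\proj)$, I set $\Vcal := \Scal^{\perp_{>0}}$. The cosuspended hypothesis makes $\Vcal$ stable under positive shifts, while the co-intermediate bounds on $\Scal$ translate into intermediate bounds for $\Vcal$ in $\D(A)$. Since $\Scal$ consists of compact objects, part (1) then yields that $\Vcal$ is a silting class, which determines a silting complex $T_\Scal$ up to equivalence.

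To see the assignments are mutually inverse, the composition ``silting complex $\to$ subcategory $\to$ silting complex'' reduces to the identity $\Scal_T^{\perp_{>0}} = T^{\perp_{>0}}$. The inclusion $\subseteq$ is immediate from $T \in \Scal_T$, and the reverse follows from the fact that every object in $\Scal_T$ admits a finite filtration by shifts of objects in $\Add T$, a standard consequence of the associated co-t-structure being bounded. The other composition amounts to the identity $\Scal = {}^{\perp_{>0}}(\Scal^{\perp_{>0}}) \cap \K^b(A\proj)$, for which only $\supseteq$ requires work.

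This last inclusion is where I expect the main obstacle. It asserts that any compact object $X$ satisfying $\Hom_{\D(A)}(X, \Vcal[i]) = 0$ for $i > 0$ already lies in $\Scal$. The plan is to describe $\Scal$ concretely as the co-aisle of a bounded co-t-structure on $\K^b(A\proj)$, built from the recovered silting complex $T_\Scal$ by extension and shift closure (following the framework of \cite{AMV1} and the co-t-structure perspective of Bondarko), and then to approximate any such $X$ by finitely many shifts of objects in $\Add T_\Scal$ using the boundedness provided by the co-intermediate condition. Once this structural description of $\Scal$ is in place, the claimed equality holds, and the bijection is complete because silting complexes are uniquely determined by their silting class up to equivalence.
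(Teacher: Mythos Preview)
Your proposal addresses only the ``Moreover'' bijection and treats part~(1) as a black box, while part~(2) is not mentioned at all. In the paper the weight is distributed oppositely: the substantive work lies in proving (1) and (2), and the bijection is then dispatched in one line by citing \cite[Theorem~4.5(ii)]{PS}, which says a compactly generated co-t-structure is determined by the intersection of its aisle with the compacts.

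The genuine gap is therefore the nontrivial direction of (1): showing that every silting class is of the form $\Scal^{\perp_{>0}}$ for a set $\Scal$ of compact objects. The paper's argument here is not formal; it passes through Theorem~\ref{silting is tilting}, identifying an $n$-silting complex with a tilting module over $AQ_n/I$ lying in $\Rep_\mathscr{P}(AQ_n/I)$, and then invokes the finite-type theorem for tilting classes \cite{BS} to obtain a set of compact $AQ_n/I$-modules whose image under $\Psi$ gives the required $\Scal$. None of this machinery appears in your sketch, and without it you cannot feed part~(1) into your bijection argument. Similarly, part~(2) requires showing cosilting complexes are pure-injective (again via the reduction to cotilting $AQ_n/I$-modules and \cite{Stov}) together with results from \cite{AMV3} and \cite{Bondarko}; this is entirely absent.

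Even restricting attention to the bijection, there are issues. You write $\Scal_T={}^{\perp_{>0}}(T^{\perp_{>0}})\cap\K^b(A\proj)$ and call it the coaisle of the co-t-structure; the paper's assignment is ${}^{\perp_0}(T^{\perp_{>0}})\cap\K^b(A\proj)$, the \emph{aisle}. More seriously, the step you flag as hardest --- recovering an arbitrary cosuspended co-intermediate $\Scal\subseteq\K^b(A\proj)$ as ${}^{\perp_{>0}}(\Scal^{\perp_{>0}})\cap\K^b(A\proj)$ --- is precisely the content of \cite[Theorem~4.5(ii)]{PS}. Your plan to prove it by describing $\Scal$ via the co-t-structure generated by $T_\Scal$ presupposes what you want: that this co-t-structure restricted to compacts returns $\Scal$ rather than some thick closure of it.
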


This statement summarises Theorems \ref{thm silting class} and \ref{thm cosilting class}. For the terminology used, we refer to the relevant sections. However, the parallel with the results concerning tilting and cotilting modules is evident.

The proof of our main theorem involves some module categories built from the category of complexes. In fact, we reduce (co)silting problems in the derived category to (co)tilting problems in these module categories. This allows us to use the finite-type characterisation of tilting classes to conclude the compact generation of silting co-t-structures. Moreover, we use the fact that cotilting modules are pure-injective to conclude that so are cosilting complexes, from which we deduce the definability of cosilting classes.

Finally, note that for the special case of two-term silting and cosilting complexes, the theorem above specialises to certain classification results recently obtained in the context of silting and cosilting modules. More precisely, part (1) translates to saying that a torsion class in the module category arises from a silting module if and only if it is \textit{divisible} with respect to a set of maps between finitely generated projective modules (see \cite[Theorem 6.3]{MaSt}). Part (2) of the theorem is, in this context, equivalent to stating that a torsionfree class in the module category arises from a cosilting module if and only if it is definable (see \cite[Corollary 3.9]{A} and the references therein).

\smallskip

The structure of the paper is as follows. In Section 2, we set up the aforementioned module categories built from the category of complexes, and we show a useful correspondence between (co)silting complexes in the derived category and certain (co)tilting modules over this new ring. In Section 3, which is subdivided into two subsections on silting and cosilting classes, we prove the main theorem of the paper.

\medskip

\noindent\textbf{Notation.}
All subcategories considered are strict and full. Throughout, let $A$ be a ring. The category of left $A$-modules is denoted by $A\Mod$, its subcategory of projective modules by $A\Proj$, its subcategory of finitely generated projective modules by $A\proj$ and its subcategory of injective modules by $A\Inj$. We write $A^+$ for the injective cogenerator $\Hom_\mathbb{Z}(A,\mathbb{Q}/\mathbb{Z})$ of $A\Mod$. If $\Ccal$ is subcategory of $A\Mod$, we denote by $\Ccal^{\perp}$ (respectively, $\Ccal^{\perp_1}$) the subcategory of $A\Mod$ formed by all objects $M$ such that $\Ext_A^j(X,M)=0$ for all $X$ in $\Ccal$ and for all $j>0$ (respectively, for $j=1$). Similarly, one defines ${}^\perp\Ccal$ (respectively, ${}^{\perp_1}\Ccal$). 

For an additive category $\Acal$, we denote by $\C(\Acal)$ and $\K^*(\Acal)$ ($*=b,\emptyset$) the corresponding category of complexes and the (bounded) homotopy category, respectively. If $\Acal=A\Mod$, we simply write $\C(A)$ and $\K^*(A)$. We also denote by $\D(A)$ the derived category of $A\Mod$. If $X$ is an object of an additive category $\Acal$, we denote by $\Add(X)$ (respectively, $\Prod(X)$) the smallest subcategory of $\Acal$ containing $X$ and closed under coproducts (respectively, products) and summands. 

If $\Ccal$ is a subcategory of a triangulated category $\Tcal$, we denote by $\mathsf{thick}(\Ccal)$ the smallest triangulated subcategory of $\Tcal$ closed under summands and containing $\Ccal$. Given a set of integers $I$ (which is often expressed by symbols such as $>n$, $<n$, $\geq n$, $\leq n$, $\neq n$, or just $n$, with the obvious associated meaning) we define the orthogonal classes ${}^{\perp_I}\Ccal$ and ${\Ccal}^{\perp_I}$ as follows.
$${}^{\perp_I}\Ccal:=\{Y\in \Tcal\mid \Hom_\Tcal(Y,X[i])=0, \forall X\in\Ccal, \forall i\in I\}\ \ \ \ {\Ccal}^{\perp_I}:=\{Y\in \Tcal\mid \Hom_\Tcal(X,Y[i])=0, \forall X\in\Ccal, \forall i\in I\}$$

%%%%%%%%%%%%%%%%%%%%%%%%%%%%%%%%%%%%%%%%%%%%%%%%%%%%%%%%%%%%%%%%%%%%%%%

\section{Complexes seen as modules}

\subsection{The category of complexes}
We begin by reviewing some homological aspects of the category of complexes of left $A$-modules. There are two natural exact structures on $\C(A)$:
\begin{itemize} 
\item an \textbf{abelian} structure, where conflations are short exact sequences of complexes. The notation $\C(A)$ will stand for the category endowed with this exact structure. The abelian category $\C(A)$ has enough projective and enough injective objects. The projective objects in $\C(A)$ are precisely those which lie in the additive closure of all complexes $(X^i,d^i)_{i\in\mathbb{Z}}$ for which there is some $n$ in $\mathbb{Z}$ such that $X^i=0$ for all $i\neq n,n+1$ and $X^n=X^{n+1}$, where $X^n$ is a projective $A$-module and $d^n$ is an isomorphism. In other words, a complex is projective if and only if it is a split exact complex of projective $A$-modules. Dually, the injective objects in $\C(A)$ are the split exact complexes of injective $A$-modules (see also \cite[Exercise 2.2.1]{Weibel}).
\medskip
\item a \textbf{semi-split} exact structure, where conflations are semi-split short exact sequences of complexes, i.e. exact sequences of complexes that split componentwise. We will write $\C_{s}(A)$ whenever the category is endowed with this exact structure. Note that $\C_{s}(A)$ is a Frobenius exact category, whose projective (and injective) objects are those in the additive closure of all complexes $(X^i,d^i)_{i\in\mathbb{Z}}$ for which there is some $n$ in $\mathbb{Z}$ such that $X^i=0$ for all $i\neq n,n+1$, $X^n=X^{n+1}$ and $d^n$ is an isomorphism. In other words, a complex is projective (and injective) in $\C_s(A)$ if and only if it is a split exact complex of $A$-modules. The stable category of the Frobenius exact category $\C_s(A)$ turns out to be triangle equivalent to the homotopy category $\K(A)$  (see also \cite[Subsection I.3.2]{Happel}).
\end{itemize}

Here is a useful proposition regarding Ext-groups in $\mathsf{C}(A)$.
\begin{proposition}\label{computing Ext}
Let $X$ and $Y$ be objects in $\mathsf{C}(A)$. If $X$ lies in $\C(A\Proj)$ or $Y$ lies in $\C(A\Inj)$, then $$\Ext^j_{\C(A)}(X,Y)\cong \Hom_{\D(A)}(X,Y[j]), \ \forall j>0.$$
\end{proposition}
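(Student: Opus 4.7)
My plan is to compute $\Ext^j_{\C(A)}(X,Y)$ by constructing an explicit resolution inside the abelian category $\C(A)$, and then match the resulting chain complex to a standard one computing $\Hom_{\D(A)}(X,Y[j])$. I would focus on the case $X \in \C(A\Proj)$; the case $Y \in \C(A\Inj)$ is formally dual, by running the same argument with an injective resolution of $Y$.

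First, I would build a projective resolution of $X$ in $\C(A)$ by iterated cones on identities. For any $Z \in \C(A)$, the canonical short exact sequence
\[
0 \to Z \to \operatorname{cone}(\operatorname{id}_Z) \to Z[1] \to 0
\]
has a split exact middle term with components $Z^n \oplus Z^{n+1}$. When $Z \in \C(A\Proj)$, this middle term is a split exact complex of projective modules, hence a projective object of $\C(A)$, as recalled at the start of the section. Applying this with $Z = X[-1], X[-2], \ldots$ (each of which remains in $\C(A\Proj)$) and splicing produces a projective resolution
\[
\cdots \to P^{-1} \to P^{0} \to X \to 0, \qquad P^{-i} = \operatorname{cone}(\operatorname{id}_{X[-i-1]}).
\]

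Next, I would compute $\Hom_{\C(A)}(P^{-i}, Y)$. A direct check shows that chain maps $\operatorname{cone}(\operatorname{id}_Z) \to Y$ are freely parametrised by families of $A$-linear maps $(Z^{n+1} \to Y^n)_{n \in \mathbb{Z}}$: given such a family, the chain-map condition on the cone automatically determines the remaining components. Specialising $Z = X[-i-1]$ yields
\[
\Hom_{\C(A)}(P^{-i}, Y) \;\cong\; \prod_{n \in \mathbb{Z}} \Hom_A(X^n, Y^{n+i}).
\]
Tracking signs carefully, the induced differentials assemble these terms into the standard total $\Hom$-complex $\Hom^\bullet(X,Y)$, whose cohomology in degree $j$ is $\Hom_{\K(A)}(X, Y[j])$. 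This identifies $\Ext^j_{\C(A)}(X,Y) \cong \Hom_{\K(A)}(X, Y[j])$ for all $j \ge 0$.

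The final and most delicate step, which I expect to be the main obstacle, is to identify $\Hom_{\K(A)}(X,Y[j])$ with $\Hom_{\D(A)}(X,Y[j])$ for $j > 0$. This is where the hypothesis $X \in \C(A\Proj)$ plays its genuine role: the componentwise projectivity of $X$ should let one lift morphisms from $\D(A)$ back to chain level (up to homotopy) in positive shifts, making the localisation $\K(A) \to \D(A)$ induce the desired bijection on positive-shift $\Hom$'s out of $X$. Equivalently, one must show that any morphism in $\K(A)$ from $X$ into an acyclic complex vanishes after positive shifting. Once this identification is in place, combining it with the previous two steps proves the proposition; the $Y \in \C(A\Inj)$ case follows by the dual argument using an injective resolution and the K-injectivity-style vanishing for $Y$.
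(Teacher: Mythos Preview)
Your first two steps are correct (for $j>0$; the $j=0$ case of your displayed isomorphism is off, since $H^0$ of your resolution complex is $\Hom_{\C(A)}(X,Y)$ rather than $\Hom_{\K(A)}(X,Y)$, but this is harmless here) and take a different route from the paper. The paper works through the semi-split exact structure $\C_s(A)$: its stable category is $\K(A)$, so $\Ext^j_{\C_s(A)}(X,Y)\cong\Hom_{\K(A)}(X,Y[j])$, and then an induction via dimension shifting shows that the canonical map $\Ext^j_{\C_s(A)}(X,Y)\to\Ext^j_{\C(A)}(X,Y)$ is an isomorphism when $X\in\C(A\Proj)$. You instead write down an explicit projective resolution of $X$ in $\C(A)$ out of mapping cones and identify $\Hom_{\C(A)}(P^\bullet,Y)$ with (the non-negative brutal truncation of) the total $\Hom$-complex. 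Both routes land at $\Ext^j_{\C(A)}(X,Y)\cong\Hom_{\K(A)}(X,Y[j])$ for $j>0$; yours is more hands-on, the paper's highlights the interplay of the two exact structures on $\C(A)$.

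The gap is exactly the step you flag, and it is genuine. Asking that $\Hom_{\K(A)}(X,Y[j])\cong\Hom_{\D(A)}(X,Y[j])$ for all $Y$ and all $j>0$ is the same as asking it for all $Y$ and all $j$ (replace $Y$ by $Y[-j]$), i.e.\ that $X$ be $\K$-projective. Componentwise projectivity does \emph{not} guarantee this: over $A=\mathbb{Z}/4\mathbb{Z}$ the doubly infinite acyclic complex $X=(\cdots\to A\xrightarrow{\,2\,}A\xrightarrow{\,2\,}A\to\cdots)$ lies in $\C(A\Proj)$ but is not contractible, so with $Y=X[-1]$ one has $\Hom_{\K(A)}(X,Y[1])\neq 0$ while $\Hom_{\D(A)}(X,Y[1])=0$, and correspondingly $\Ext^1_{\C(A)}(X,Y)\neq 0$. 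Your ``positive shifting'' hope therefore cannot succeed. The paper asserts this same isomorphism without justification, so the proposition as literally stated fails for unbounded complexes; it \emph{is} true when $X$ is a bounded complex of projectives (then $X$ is $\K$-projective), and this bounded case is all that is ever invoked later via the functor $\Psi$ from $\Rep(AQ_n/I)$.
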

\begin{proof}
Let $X$ be in $\C(A\Proj)$ and $Y$ be in $\C(A)$ (the arguments for when $Y$ lies in $\C(A\Inj)$ and $X$ is any complex in $\C(A)$ are dual). Since the stable category of $\C_s(A)$ is triangle equivalent to the homotopy category $\K(A)$, and since $X$ lies in $\C(A\Proj)$, we have the following sequence of natural isomorphisms for all $j>0$
$$\Ext_{\C_s(A)}^j(X,Y)\cong \Hom_{\K(A)}(X,Y[j])\cong \Hom_{\D(A)}(X,Y[j]).$$
We show by induction that the natural embedding of $\Ext_{\C_s(A)}^j(X,Y)$ into $\Ext_{\C(A)}^j(X,Y)$ is an isomorphism.
For $j=1$, any short exact sequence of complexes in $\Ext^1_{\C(A)}(X,Y)$ splits componentwise and, hence, it lies naturally in $\Ext^1_{\C_s(A)}(X,Y)$. 
Suppose now that the statement holds for some $j>0$. We use dimension shifting to show the inductive step. Indeed, consider a projective object $P$ of $\C(A)$ yielding a (semi-split) short exact sequence of the form
$$\xymatrix{0\ar[r]& \Omega_X\ar[r]& P\ar[r]& X\ar[r] &0.}$$
It is then clear that $\Ext^{j+1}_{\C(A)}(X,Y)$ is isomorphic to  $\Ext^{j}_{\C(A)}(\Omega_X,Y)$ with $\Omega_X$ in $\C(A\Proj)$. By induction, this is further isomorphic to $\Ext_{\C_s(A)}^j(\Omega_X,Y)\cong\Ext_{\C_s(A)}^{j+1}(X,Y)$, as wanted.
\end{proof}

\begin{remark}
For two complexes $X$ and $Y$, the group $\Hom_{\D(A)}(X,Y[j])$ is sometimes denoted by $\Ext_{A}^j(X,Y)$. The motivation for this comes from the fact that if $X$ and $Y$ lie in $A\Mod$, then the group of Yoneda extensions indeed coincides with the corresponding $\Hom$-space in the derived category. The above proposition shows that this motivation can be extended to complexes, provided that either $X$ lies in $\C(A\Proj)$ or $Y$ lies in $\C(A\Inj)$. However, it is not true in general that Yoneda extension groups between complexes can be computed in the derived category. Indeed, for example, extension groups between acyclic complexes (or even between complexes homotopically equivalent to the zero complex) may be non-trivial while such objects are isomorphic to zero in the derived category (respectively, in the homotopy category).
\end{remark}

%%%%%%%%%%%%%%%%%%%%%%%%%%%%%%%%%%%%%%%%%%%%%%%%%%%%%%%%%%%

\subsection{The categories $\Rep(AQ_n/I)$}

Fix $n>0$ and let $Q_n$ be the Dynkin quiver
$$\xymatrix{-n+1\ar[rr]^{\alpha_{-n+1}}&&-n+2\ar[rr]^{\alpha_{-n+2}}&&\cdots \ar[rr]^{\alpha_{-2}}&&-1\ar[rr]^{\alpha_{-1}}&&0}.$$
We are interested in the ring $AQ_n/I$ defined to be the quotient of the path algebra $AQ_n$ by the ideal $I$ generated by all paths of length two. Alternatively, we can think of $AQ_n/I$ as the quotient of the lower triangular matrix ring $T_n(A)$ by the two-sided ideal generated by all the elementary matrices $E_{ij}$ with $i-j\ge 2$. 

By $\Rep(AQ_n/I)$ we denote the category of representations of $Q_n$ in $A\Mod$ that are bound by the relations defining $I$.
An object $M$ in $\Rep(AQ_n/I)$ is a sequence of left $A$-modules $M(i)$ together with homomorphisms $M(\alpha_i):M(i)\rightarrow M(i+1)$ such that $M(\alpha_{i+1})M(\alpha_i)=0$. A morphism $f:M\rightarrow N$ in $\Rep(AQ_n/I)$ is a family $(f_i)_{-n+1\leq i\leq 0}$ of homomorphisms of left $A$-modules $f_i:M(i)\rightarrow N(i)$ such that $N(\alpha_i)f_i=f_{i+1}M(\alpha_i)$. By $\Rep_\mathscr{P}(AQ_n/I)$ and $\Rep_\mathscr{I}(AQ_n/I)$ we denote the full subcategories of $\Rep(AQ_n/I)$ consisting of objects $M$ such that all $M(i)$ lie in $A\Proj$ or in $A\Inj$, respectively.
Note that the category $\Rep(AQ_n/I)$ is equivalent to the module category $AQ_n/I\Mod$. We can also think of $\Rep(AQ_n/I)$ as the category of covariant functors from the small category defined by $Q_n$ (with zero relations on the morphisms induced by $I$) to $A\Mod$.

\begin{example}\label{example}
In case $n=2$, the category $\Rep(AQ_2/I)=\Rep(AQ_2)$ is naturally equivalent to the morphism category of $A$, whose objects are maps of $A$-modules and whose morphisms are commutative squares relating them. 
We provide an explicit example. Let $\Kbb$ be a field and choose $A$ to be $\Kbb Q_2$. The structure of the category $A\Mod\cong\Rep(\Kbb Q_2)$ is represented by the following Auslander-Reiten quiver
$${\small{\xymatrix{& Q=(\Kbb\overset{1}{\ra}\Kbb)\ar[dr]^\pi & \\ P=(0\ra\Kbb)\ar[ur]^i & & S=(\Kbb\ra 0)}}}$$
The algebra $AQ_2$ turns out to be isomorphic to the quotient of the path algebra over $\Kbb$ of the quiver 
$${\small{\xymatrix{& \cdot\ar[dr] & \\ \cdot\ar[ur]\ar[dr] & & \cdot \\ & \cdot\ar[ur] &}}}$$
by the ideal generated by the commutativity relation. The structure of the category $AQ_2\Mod\cong\Rep(AQ_2)$ is represented by the following Auslander-Reiten quiver
$${\small{\xymatrix{& (0\ra Q)\ar[dr] & & (P\ra 0)\ar[dr] & & (S\overset{1}{\ra} S)\ar[dr] & \\ (0\ra P)\ar[dr]\ar[ur] & & (P\overset{i}{\ra} Q)\ar[ur]\ar[r]\ar[dr] & (Q\overset{1}{\ra} Q)\ar[r] & (Q\overset{\pi}{\ra} S)\ar[ur]\ar[dr] & & (S\ra 0)\\ & (P\overset{1}{\ra} P)\ar[ur] & & (0\ra S)\ar[ur] & & (Q\ra 0)\ar[ur] &}}}$$
This example may help to visualise the more technical statements obtained later in this section.
\end{example}

In what follows, we relate representations in $\Rep(AQ_n/I)$ to complexes of $A$-modules. Note that the category $\Rep(AQ_n/I)$ can naturally be identified with a full subcategory of $\C(A)$. Indeed, consider the fully faithful functor $\Theta\colon \Rep(AQ_n/I)\longrightarrow \C(A)$ that maps $M$ in $\Rep(AQ_n/I)$ to the complex $(X^i,d^i)$ with $X^i=M(i)$ for all $i\in\{-n+1,...,0\}$, $d^i=M(\alpha_{i})$ for all $i\in\{-n+1,...,-1\}$ and $X^i=0$ otherwise. We will also be interested in the functor $\Psi:\Rep(AQ_n/I)\longrightarrow \D(A)$ defined to be the composition of $\Theta$ with the canonical functor $\C(A)\longrightarrow \D(A)$.

\begin{lemma}\label{ff homological} 
The full embedding $\Theta$ is homological, i.e. for all objects $M$ and $N$ in $\Rep(AQ_n/I)$ and all $j>0$, the functor $\Theta$ induces an isomorphism 
$$\Ext_{AQ_n/I}^j(M,N)\cong\Ext_{\C(A)}^j(\Theta(M),\Theta(N)).$$
If, moreover, $M$ lies in $\Rep_\mathscr{P}(AQ_n/I)$ or $N$ lies in $\Rep_\mathscr{I}(AQ_n/I)$, then
$$\Ext_{AQ_n/I}^j(M,N)\cong\Hom_{\D(A)}(\Psi(M),\Psi(N)[j]).$$
\end{lemma}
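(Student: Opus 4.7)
The plan is to establish the first isomorphism, from which the second follows immediately: if $M\in\Rep_{\mathscr{P}}(AQ_n/I)$ then $\Theta(M)\in\C(A\Proj)$, and dually for $N\in\Rep_{\mathscr{I}}(AQ_n/I)$, so Proposition \ref{computing Ext} combined with the first isomorphism yields $\Ext^j_{AQ_n/I}(M,N)\cong\Hom_{\D(A)}(\Psi M,\Psi N[j])$ for every $j>0$.

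For the first isomorphism I would compute $\Ext^\bullet_{\C(A)}(\Theta M,\Theta N)$ via an acyclic resolution rather than a projective one. Specifically, choose a projective resolution $P_\bullet\to M$ in $\Rep(AQ_n/I)\cong AQ_n/I\Mod$ and apply the exact, fully faithful functor $\Theta$ to obtain an exact sequence $\Theta P_\bullet\to\Theta M$ in $\C(A)$. Provided each $\Theta P_i$ is $\Hom_{\C(A)}(-,\Theta N)$-acyclic, that is $\Ext^j_{\C(A)}(\Theta P_i,\Theta N)=0$ for $j>0$, the right derived functor can be computed via this resolution, yielding
\[\Ext^j_{\C(A)}(\Theta M,\Theta N)\cong H^j(\Hom_{\C(A)}(\Theta P_\bullet,\Theta N))\cong H^j(\Hom_{AQ_n/I}(P_\bullet,N))\cong\Ext^j_{AQ_n/I}(M,N),\]
where the middle isomorphism uses full faithfulness of $\Theta$.

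The main obstacle is therefore the vanishing $\Ext^j_{\C(A)}(\Theta P,\Theta N)=0$ for $j>0$ and every projective $P$ of $\Rep(AQ_n/I)$. Up to direct sums and summands, and up to replacing $A$ by a general projective $A$-module, the indecomposable projectives of $AQ_n/I\Mod$ come in two types: (a) the two-term complex $A\overset{1}{\to}A$ supported in degrees $v,v+1$ for $v\in\{-n+1,\dots,-1\}$, and (b) the stalk $A$ concentrated in degree $0$, henceforth denoted $A[0]$. Type (a) is sent under $\Theta$ to a split exact complex of projective $A$-modules, which by the description of projectives of $\C(A)$ recalled in Subsection 2.1 is a projective object of $\C(A)$, so the vanishing is immediate. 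Type (b) is the delicate case, for $A[0]$ lies in $\C(A\Proj)$ but is \emph{not} projective in $\C(A)$. Here Proposition \ref{computing Ext} saves the situation, giving
\[\Ext^j_{\C(A)}(A[0],\Theta N)\cong\Hom_{\D(A)}(A[0],\Theta N[j])\cong H^j(\Theta N),\]
which vanishes for $j\geq 1$ since $\Theta N$ is concentrated in degrees $[-n+1,0]$. As $\Hom_{\C(A)}(-,\Theta N)$-acyclicity passes to direct sums and summands, the vanishing extends to all projectives of $\Rep(AQ_n/I)$, closing the argument.
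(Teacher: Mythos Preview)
Your argument is correct and takes a genuinely different route from the paper's own proof. The paper works directly with Yoneda extensions: given a $j$-extension $\eta$ of $\Theta N$ by $\Theta M$ in $\C(A)$, it shows that the naive truncations of the middle terms above degree $0$ and then below degree $-n+1$ produce an equivalent extension $\eta^{**}$ lying in the essential image of $\Theta$, so the natural map $\Ext^j_{AQ_n/I}(M,N)\to\Ext^j_{\C(A)}(\Theta M,\Theta N)$ is surjective (injectivity being automatic from exactness and full faithfulness). Your approach instead transports a projective resolution of $M$ along $\Theta$ and verifies that the images of the projective $AQ_n/I$-modules are $\Hom_{\C(A)}(-,\Theta N)$-acyclic, the key step being that Proposition~\ref{computing Ext} disposes of the non-contractible generator $A[0]$ via $H^j(\Theta N)=0$ for $j\geq 1$.

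One small remark on logical order: the description of the projectives of $AQ_n/I$ that you invoke appears in the paper as Lemma~\ref{properties CnA}(2), whose proof \emph{uses} the present lemma. This is harmless for your argument, since you only need that the free module $AQ_n/I=\bigoplus_j (AQ_n/I)e_j$ decomposes into your types (a) and (b), which is an elementary computation with the path algebra and does not require Lemma~\ref{ff homological}; taking free resolutions then suffices. The paper's truncation argument is more self-contained for the first isomorphism (Proposition~\ref{computing Ext} enters only for the second), whereas yours extracts more from the machinery already established and is perhaps closer in spirit to how one usually shows that an exact fully faithful embedding with acyclic projectives preserves $\Ext$.
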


\begin{proof}
Clearly, $\Theta$ is exact and it induces an injective map $\Ext_{AQ_n/I}^j(M,N)\longrightarrow \Ext_{\C(A)}^j(\Theta(M),\Theta(N))$ for any $j>0$ and any $M,N$ in $\Rep(AQ_n/I)$. We observe that this map is also surjective, thus showing that $\Theta$ is homological. Given a Yoneda extension $\eta$ of the form
$$\xymatrix{0\ar[r] & \Theta(N)\ar[r] & E_1\ar[r] & ...\ar[r] & E_j\ar[r] & \Theta(M)\ar[r] & 0}$$
it is enough to show that, up to equivalence in $\Ext_{\C(A)}^j(\Theta(M),\Theta(N))$, 
we can choose the complexes $E_i$ for $i\in\{1,...,j\}$ to only have non-zero components in degrees $-n+1$ to $0$.
Define $\eta^*$ to be the exact sequence of complexes we obtain from $\eta$ by setting $(E_i)^k=0$ for all $k>0$ and $\eta^{**}$ to be the exact sequence we obtain from $\eta^*$ by setting $(E_i)^k=0$ for all $k<-n+1$. Now it is not hard to check that there are maps from the exact sequences $\eta$ and $\eta^{**}$ to the exact sequence $\eta^*$, thus showing that $\eta$ and $\eta^{**}$ represent the same element in $\Ext_{\C(A)}^j(\Theta(M),\Theta(N))$. Finally, the last statement follows from Proposition \ref{computing Ext}.
\end{proof}

We often identify an object $M$ in $\Rep(AQ_n/I)$ with its image under $\Theta$ or $\Psi$. Using the lemma above, we can compute the projective or injective dimension of certain objects in $\Rep(AQ_n/I)$ and, in particular, identify the projective and injective objects there. Given $j\in\{-n+1,...,0\}$ and $X$ in $A\Mod$, we define the following representations $X_j$ and $X^j$ in $\Rep(AQ_n/I)$.

\begin{itemize}
\item If $j\neq 0$ (respectively, $j\neq -n+1$) we define $X_j$ (respectively, $X^j$) to correspond under $\Theta$ to the following complex concentrated in degrees $j$ and $j+1$ (respectively, in degrees $j$ and $j-1$):
$$\xymatrix{\cdots\ar[r]&0\ar[r]&X\ar[r]^1&X\ar[r]&0\ar[r]&\cdots}$$
Note that $X_j=X^{j+1}$ for all $j\in\{-n+1,...,-1\}$. Direct sums of such objects are called \textbf{contractible}.
\item If $j=0$ (respectively, $j=-n+1$) we define $X_j$ (respectively, $X^j$) to correspond to the stalk complex 
$$\xymatrix{\cdots\ar[r]&0\ar[r]&0\ar[r]&X\ar[r]&0\ar[r]&0\ar[r]&\cdots}$$
concentrated in degree $0$ only (respectively, in degree $-n+1$).
\end{itemize}

\begin{lemma}\label{properties CnA}
The following statements hold.
\begin{enumerate}
\item The full subcategories $\Rep_\mathscr{P}(AQ_n/I)$ and $\Rep_\mathscr{I}(AQ_n/I)$ are extension-closed in $\Rep(AQ_n/I)$ and, thus, they are exact subcategories for the inherited exact structure.
\item The projective objects of $\Rep(AQ_n/I)$ lie in $\Rep_\mathscr{P}(AQ_n/I)$. More precisely, $AQ_n/I\Proj$ identifies with the additive closure of the representations $P_j$ where $P$ is a projective $A$-module. A direct sum of objects of the form $P_j$ with $j\neq 0$ is said to be \textbf{contractible projective}.
\item The injective objects of $\Rep(AQ_n/I)$ lie in $\Rep_\mathscr{I}(AQ_n/I)$. More precisely, $AQ_n/I\Inj$ identifies with the closure under products of the representations $E^j$ where $E$ denotes an injective $A$-module. A product of objects of the form $E^j$ with $j\neq -n+1$ is said to be \textbf{contractible injective}.
\item The projective objects of $\Rep_\mathscr{P}(AQ_n/I)$ are precisely the projective representations of $\Rep(AQ_n/I)$. The injective objects of $\Rep_\mathscr{P}(AQ_n/I)$ are given by the contractible projectives and the representations $P^{-n+1}$ for $P$ projective in $A\Mod$.
\item The injective objects of $\Rep_\mathscr{I}(AQ_n/I)$ are precisely the injective representations of $\Rep(AQ_n/I)$. The projective objects of $\Rep_\mathscr{I}(AQ_n/I)$  are given by the contractible injectives and the representations $E_0$ for $E$ injective in $A\Mod$.
\item For $M$ in $\Rep_\mathscr{P}(AQ_n/I)$ without projective summands, the projective dimension of $M$ in $\Rep(AQ_n/I)$ equals $-j$, where $j$ is the smallest integer in $\{-n+1,...,-1\}$ such that $M(j)\neq 0$.
\item For $M$ in $\Rep_\mathscr{I}(AQ_n/I)$ without injective summands, the injective dimension of $X$ in $\Rep(AQ_n/I)$ equals $n-1+j$, where $j$ is the largest integer in $\{-n+2,...,0\}$ such that $M(j)\neq 0$.
\end{enumerate}
\end{lemma}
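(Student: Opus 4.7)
The plan is to reduce all Ext computations in $\Rep(AQ_n/I)$ to morphism computations in $\D(A)$ via Lemma \ref{ff homological}, and then to exploit the $\K$-projectivity of bounded complexes of projective $A$-modules together with the restricted degree range $\{-n+1,\dots,0\}$. Part (1) is direct: in a short exact sequence $0\to M'\to M\to M''\to 0$ in $\Rep(AQ_n/I)$ with $M',M''\in\Rep_\mathscr{P}(AQ_n/I)$, each component $0\to M'(i)\to M(i)\to M''(i)\to 0$ splits because $M''(i)$ is projective, so $M(i)$ is projective and $M\in\Rep_\mathscr{P}(AQ_n/I)$; the dual argument covers $\Rep_\mathscr{I}(AQ_n/I)$. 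For (2) and (3) I would read off the indecomposable projective at vertex $i$ from paths out of $i$ modulo the length-two relations: this gives the contractible $A_i$ for $i\neq 0$ and the stalk $A_0$, which promotes to $P_i$ for any projective $A$-module $P$; the description of injectives $E^j$ is dual.

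For parts (4) and (5), combining Lemma \ref{ff homological} with (1) yields $\Ext^1_{\Rep_\mathscr{P}(AQ_n/I)}(M,N)\cong \Hom_{\D(A)}(M,N[1])$ for $M,N\in\Rep_\mathscr{P}(AQ_n/I)$. Any projective of $\Rep(AQ_n/I)$ already lies in $\Rep_\mathscr{P}(AQ_n/I)$ by (2) and is a fortiori exact-projective there; conversely, any exact-projective $M\in\Rep_\mathscr{P}(AQ_n/I)$ admits a surjection from a projective of $\Rep(AQ_n/I)$ whose kernel lies in $\Rep_\mathscr{P}(AQ_n/I)$ by (1), so this conflation splits and $M$ is a summand of a projective. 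For the injectives in $\Rep_\mathscr{P}(AQ_n/I)$, the contractible $P_j$ ($j\neq 0$) is zero in $\D(A)$, and the shift $P^{-n+1}[1]$ is a stalk at degree $-n$ lying outside the range of $\Rep_\mathscr{P}(AQ_n/I)$, so both families have vanishing $\Hom_{\D(A)}(-,-[1])$ from all of $\Rep_\mathscr{P}(AQ_n/I)$. Conversely, given an injective $I\in\Rep_\mathscr{P}(AQ_n/I)$, I would peel off all contractible summands to obtain $I'$ with none; testing against stalks of projective $A$-modules at each $k\in\{-n+2,\dots,0\}$ and using $\Hom_{\D(A)}(P[-k],I'[1])\cong \Hom_A(P,H^k(I'))$ forces $H^k(I')=0$ for all $k>-n+1$, and the standard splitting of a surjection of projective $A$-modules then propagates from degree $0$ inductively to kill $I'(k)$ for all $k>-n+1$ — any non-split piece would produce a contractible summand — leaving $I'=P^{-n+1}$. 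Part (5) is dual inside $\Rep_\mathscr{I}(AQ_n/I)$.

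For parts (6) and (7), the upper bound is a degree argument: for $M\in\Rep_\mathscr{P}(AQ_n/I)$ with smallest nonzero index $j\in\{-n+1,\dots,-1\}$ and any $k>-j$, a chain map $\Theta(M)\to\Theta(N)[k]$ must be zero because source and target have disjoint supports, and $\K$-projectivity of $\Theta(M)$ transfers this vanishing to $\Hom_{\D(A)}$ and hence via Lemma \ref{ff homological} to $\Ext^k_{AQ_n/I}(M,N)$. The main obstacle, and the point where the hypothesis ``$M$ has no projective summand'' is essential, is the matching lower bound. I would take $N=M(j)_0$ and consider the candidate chain map $\Theta(M)\to\Theta(N)[-j]$ whose degree-$j$ component is $\mathrm{id}_{M(j)}$: it is null-homotopic iff $d^j_M:M(j)\to M(j+1)$ admits a retraction, and such a retraction would realize $M(j)$ as a contractible summand $P_j$ of $M$ — contradicting the hypothesis. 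Hence $\Ext^{-j}_{AQ_n/I}(M,N)\neq 0$, pinning the projective dimension at $-j$. Part (7) is dual, carried out inside $\Rep_\mathscr{I}(AQ_n/I)$ using an injective stalk at position $-n+1$ and the largest nonzero index of $M$.
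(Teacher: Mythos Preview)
Your proposal is essentially correct and follows the same overall strategy as the paper --- reduce Ext in $\Rep(AQ_n/I)$ to $\Hom$ in $\D(A)$ via Lemma~\ref{ff homological} and exploit the bounded degree range --- but differs in tactics at two points and contains one indexing slip.

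For (2), the paper does not invoke the path-algebra description of projectives; instead it verifies directly via Lemma~\ref{ff homological} that each $P_j$ is projective, and for the converse it takes a projective $M$ without contractible summands, assumes $M(j)\neq 0$ for some $j<0$, and builds a shifted copy $M'$ (with $M'(i)=M(i-1)$) so that the componentwise identity gives a nonzero element of $\Hom_{\D(A)}(\Psi(M),\Psi(M')[1])$. Your path-algebra reading is more elementary and perfectly valid; the paper's shifted-complex trick has the advantage of being reusable verbatim for the injectives of $\Rep_\mathscr{P}(AQ_n/I)$ in (4).

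In (4), your splitting argument for the projective half is fine, but the justification ``whose kernel lies in $\Rep_\mathscr{P}(AQ_n/I)$ by (1)'' is not quite right: (1) gives extension-closure, not kernel-closure. The actual reason is the same componentwise observation you already used in (1): each $0\to K(i)\to P(i)\to M(i)\to 0$ splits because $M(i)$ is projective. For the injective half, your cohomology-vanishing approach works, but the formula has an off-by-one: $\Hom_{\D(A)}(P[-k],I'[1])\cong\Hom_A(P,H^{k+1}(I'))$, not $H^k(I')$. Correspondingly you should test at $k\in\{-n+1,\dots,-1\}$ to force $H^m(I')=0$ for $m\in\{-n+2,\dots,0\}$; after that your inductive peeling of contractible summands from degree $0$ downward is correct. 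The paper instead simply dualises the shifted-complex argument from (2).

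For (6) and (7) your argument coincides with the paper's: the upper bound is the degree-support argument, and the lower bound is witnessed by the identity-on-$M(j)$ chain map to $(M(j))_0[-j]$, which is null-homotopic precisely when $d^j$ splits and hence produces a contractible (hence projective) summand $M(j)_j$.
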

\begin{proof}
The first statement is immediate. We will prove statements (2), (4) and (6). Statements (3), (5) and (7) can be shown dually. 

(2): It is clear that the projective objects of $\Rep(AQ_n/I)$ must lie in $\Rep_\mathscr{P}(AQ_n/I)$. By Lemma \ref{ff homological}, contractible projective objects are indeed projective in $\Rep(AQ_n/I)$, since, when regarded as complexes, they are  homotopic to zero. Moreover, for any projective $P$ in $A\Mod$, also the representation $P_0$ is projective in $\Rep(AQ_n/I)$, since $\Ext^1_{AQ_n/I}(P_0,-)\cong \Hom_{\D(A)}(\Psi(P_0),\Psi(-)[1])=0$. Now let $M$ be projective in $\Rep(AQ_n/I)$ without contractible summands. We show that $M$ is of the form $P_0$ for some $P$ in $A\Proj$. Suppose that $M(j)\not=0$ for some $j\in\{-n+1,...,-1\}$ and let $M^\prime$ be the representation in $\Rep_\mathscr{P}(AQ_n/I)$ given by $M^\prime(i)=M(i-1)$ and $M^\prime(\alpha_i)=M(\alpha_{i-1})$ for all $i\in\{-n+2,...,0\}$ and $M^\prime(-n+1)=0$. By assumption on $M$, it follows that $\Psi(M)$ and $\Psi(M^\prime)$ are non-zero objects in the derived category $\D(A)$. Furthermore, by construction of $M^\prime$ there is a non-zero morphism in $\Hom_{\D(A)}(\Psi(M),\Psi(M^\prime)[1])$ given by identity maps in each component different from zero. This contradicts our assumption of $M$ being projective. Thus, the projectives in $\Rep(AQ_n/I)$ with no contractible summands are those of the form $P_0$.

(4): Clearly, all projectives in $\Rep(AQ_n/I)$ are projective in $\Rep_\mathscr{P}(AQ_n/I)$ and it follows from the same argument as in (2) that these are the only projective objects of $\Rep_\mathscr{P}(AQ_n/I)$. Using Lemma \ref{ff homological}, it is easy to see that the contractible projective objects are also injective in $\Rep_\mathscr{P}(AQ_n/I)$, and a dual argument to the one used in (2) allows us to see that an injective object of $\Rep_\mathscr{P}(AQ_n/I)$ without contractible summands must be of the form $P^{-n+1}$ for some projective $A$-module $P$.

(6): This follows from Lemma \ref{ff homological} by observing that for $M$ in $\Rep_\mathscr{P}(AQ_n/I)$ and $j$ the smallest integer in $\{-n+1,...,-1\}$ such that $M(j)\neq 0$, we have 
$$\Ext_{AQ_n/I}^{-j+1}(M,N)\cong\Hom_{\D(A)}(\Psi(M),\Psi(N)[-j+1])=0$$ 
for all $N$ in $\Rep(AQ_n/I)$. Moreover, since $M$ has no projective summands, the identity map on $M(j)$ yields a non-zero extension in $\Ext_{AQ_n/I}^{-j}(M,(M(j))_0)$.
\end{proof}

We encourage the reader to revisit Example \ref{example} to test the lemma above in a concrete setting.
The following statement is an immediate consequence regarding global dimension.

\begin{corollary}
If the ring $A$ has left global dimension $d$, then $AQ_n/I$ has left global dimension $d+n-1$. Moreover, if $A$ is of infinite left global dimension, then so is $AQ_n/I$.
\end{corollary}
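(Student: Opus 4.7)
The plan is to establish the formula
\[
\text{pd}_{\Rep(AQ_n/I)}(\widetilde{X}) \;=\; \text{pd}_A(X) + n-1
\]
for every \(A\)-module \(X\), where \(\widetilde{X}\) denotes the stalk representation of \(X\) supported at the leftmost vertex \(-n+1\), together with the matching upper bound \(\text{pd}(M)\le \max_i \text{pd}_A(M(i))+n-1\) for an arbitrary \(M\in\Rep(AQ_n/I)\). These give \(\text{gldim}\,AQ_n/I = d+n-1\) in the finite case, and the infinite case follows immediately: a sequence of \(A\)-modules of unbounded projective dimension yields, via the formula, a sequence in \(\Rep(AQ_n/I)\) of unbounded projective dimension.

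The engine of the argument is the exact, fully faithful embedding \(i_*\colon\Rep(AQ_{n-1}/I)\hookrightarrow\Rep(AQ_n/I)\) adjoining a zero at vertex \(-n+1\). By Lemma~\ref{properties CnA}(2) it sends indecomposable projectives to indecomposable projectives of the same shape, so it preserves projective dimensions. Inducting on \(n\), the heart of the inductive step is the short exact sequence
\[
0 \longrightarrow \widetilde{X}' \longrightarrow X_{-n+1} \longrightarrow \widetilde{X} \longrightarrow 0
\]
in \(\Rep(AQ_n/I)\), where \(\widetilde{X}'\) is the stalk at vertex \(-n+2\) (which lies in the image of \(i_*\)) and \(X_{-n+1}\) is the contractible representation \(X\xrightarrow{1}X\) at vertices \(-n+1,-n+2\). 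Resolving \(X_{-n+1}\) by the contractibles \((P_i)_{-n+1}\) coming from a projective \(A\)-resolution \(P_\bullet\to X\) shows \(\text{pd}(X_{-n+1})=\text{pd}_A(X)=d\) (restriction to vertex \(-n+1\) giving the reverse inequality), while by induction \(\text{pd}(\widetilde{X}')=d+n-2\). For \(n\ge 3\) the strict inequality \(d+n-2>d\) forces \(\text{pd}(\widetilde{X})=d+n-1\) via the standard long-exact-sequence argument.

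The base case \(n=2\) is where this direct induction fails (both end terms of the above SES have projective dimension \(d\)), and this is the main obstacle. I would overcome it by a secondary induction on \(d\). Taking a projective cover \(P_0\twoheadrightarrow X\) with kernel \(K_0\) of projective dimension \(d-1\), the kernel \(K\) of the induced surjection \((P_0)_{-1}\twoheadrightarrow\widetilde{X}\) has \(K_0\) at vertex \(-1\) and \(P_0\) at vertex \(0\), connected by the inclusion \(K_0\hookrightarrow P_0\). A direct computation produces a second short exact sequence
\[
0 \longrightarrow (P_0)_0 \longrightarrow K \longrightarrow \widetilde{K_0} \longrightarrow 0,
\]
in which \((P_0)_0\) is the projective stalk at vertex \(0\) and \(\widetilde{K_0}\) is the stalk of \(K_0\) at \(-1\). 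By the inductive hypothesis on \(d\), \(\text{pd}(\widetilde{K_0})=d\), hence \(\text{pd}(K)=d\), and finally \(\text{pd}(\widetilde{X})=d+1\), closing the base case.

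The upper bound for general \(M\in\Rep(AQ_n/I)\) follows by the analogous inductive scheme applied to the short exact sequence peeling off the contribution of vertex \(-n+1\) from \(M\) (with the first layer being the subrepresentation generated by \(M(-n+1)\)). The stalks \(\widetilde{X}\) with \(X\) of projective dimension exactly \(d\) saturate the bound, proving \(\text{gldim}\,AQ_n/I=d+n-1\) in the finite case and \(\text{gldim}\,AQ_n/I=\infty\) when \(\text{gldim}\,A=\infty\).
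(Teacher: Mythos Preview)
Your argument is essentially correct and reaches the result, but it takes a substantially different route from the paper. The paper's proof is much shorter: for the upper bound it resolves each $M(j)$ by $A$-projectives simultaneously to obtain an exact sequence $0 \to N_d \to \cdots \to N_0 \to M \to 0$ in $\Rep(AQ_n/I)$ with every $N_i \in \Rep_\mathscr{P}(AQ_n/I)$, and then invokes Lemma~\ref{properties CnA}(6) to get $\text{pd}(N_i) \le n-1$, giving $\text{pd}(M) \le d+n-1$ in one stroke. For the lower bound the paper simply asserts that $X^{-n+1}$ (your $\widetilde{X}$) has projective dimension $d+n-1$ when $\text{pd}_A(X)=d$; this can be checked by splicing the explicit length-$(n-1)$ projective resolution $0\to P_0\to P_{-1}\to\cdots\to P_{-n+1}\to P^{-n+1}\to 0$ of each stalk (for $P\in A\Proj$) with an $A$-projective resolution of $X$, and testing against a stalk $Y_0$. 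Your double induction on $n$ and $d$ supplies a self-contained proof of this same fact without leaning on Lemma~\ref{properties CnA}(6), which is a reasonable trade-off, but it is considerably longer.

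One caution on your upper-bound sketch: the subrepresentation $M'\subseteq M$ generated by $M(-n+1)$ has $\operatorname{im} M(\alpha_{-n+1})$ at vertex $-n+2$, so the quotient $M/M'$ has $M(-n+2)/\operatorname{im} M(\alpha_{-n+1})$ there, and this module may have strictly larger $A$-projective dimension than $M(-n+2)$ itself, breaking the inductive bound you need. The fix is to use instead the short exact sequence $0 \to i_*(j^*M) \to M \to \widetilde{M(-n+1)} \to 0$ (where $j^*$ restricts to vertices $\ge -n+2$), whose outer terms have unchanged vertex modules; your induction then goes through cleanly. You should also explicitly note the base case $d=0$ of your secondary induction (i.e.\ that $(P\to 0)$ is non-projective of projective dimension $1$ for $P\in A\Proj$), which you use but do not state.
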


\begin{proof}
Take an object $N$ in $\Rep(AQ_n/I)$ and choose a projective resolution of minimal length for every $A$-module $N(j)$ with $j\in\{-n+1,...,0\}$. These resolutions give rise to a long exact sequence in $\Rep(AQ_n/I)$
$$\xymatrix{ 0\ar[r] & N_d\ar[r] & N_{d-1}\ar[r] & ...\ar[r] & N_0\ar[r] & N\ar[r] & 0.}$$
By construction, all the $N_i$ lie in $\Rep_\mathscr{P}(AQ_n/I)$ for $i\in\{0,...,d\}$ and, thus, by Lemma \ref{properties CnA}(6), they have projective dimension at most $n-1$. Consequently, the projective dimension of $N$ is bounded by $d+n-1$. Finally, it is easy to check that for any $A$-module $X$ of projective dimension $d$, the representation $X^{-n+1}$ has projective dimension $d+n-1$ in $\Rep(AQ_n/I)$.
\end{proof}

\begin{remark}
We define the infinite quiver $Q_{\infty}^\infty$ as 
$$\xymatrix{\cdots\ar[rr]&&-n\ar[rr]^{\alpha_{-n}}&&-n+1\ar[rr]^{\alpha_{-n+1}}&&\cdots \ar[rr]^{\alpha_{-1}}&&0\ar[rr]^{\alpha_{0}}&&1\ar[rr]&&\cdots}$$
with its two full subquivers $Q_{\infty}$ and $Q^{\infty}$ having a sink (respectively, a source) at vertex $0$.
As before, we can consider associated categories of representations where $\Rep(AQ^\infty_\infty/I)$ is naturally equivalent to $\C(A)$. 
The category $\Rep(AQ_\infty/I)$ can be used to translate the parts of Lemma \ref{properties CnA} concerning $\Rep_\mathscr{P}(AQ_n/I)$ to an infinite context, while the category $\Rep(AQ^\infty/I)$ serves to generalise the observations in Lemma \ref{properties CnA} on $\Rep_\mathscr{I}(AQ_n/I)$. Moreover, note that the categories of representations induced by $Q_{\infty}^\infty$, $Q_{\infty}$ and $Q^\infty$ are Grothendieck abelian categories with enough projectives, but they are no longer module categories of a unital ring. Therefore, we will restrict ourselves in the forthcoming sections to the finite case of $\Rep(AQ_n/I)$.
\end{remark}

%%%%%%%%%%%%%%%%%%%%%%%%%%%%%%%%%%%%%%%%%%%%%%%%%%%%%%%%%%%%

\subsection{(Co)Silting complexes and (co)tilting modules}
Let us fix $n>0$. We recall the following definitions.
\begin{definition}
A complex $X$ in $\D(A)$ is said to be 
\begin{itemize}
\item \textbf{silting} if $\Hom_{\D(A)}(X,X^{(J)}[i])=0$ for all $i>0$ and all sets $J$, and $\thick(\Add(X))=\K^b(A\Proj)$; it is moreover said to be \textbf{$n$-silting} if $X$ lies in $\Psi(\Rep_\mathscr{P}(AQ_n/I))$.
\item \textbf{cosilting} if $\Hom_{\D(A)}(X^J,X[i])=0$ for all $i>0$ and all sets $J$, and $\thick(\Prod(X))=\K^b(A\Inj)$; it is moreover said to be \textbf{$n$-cosilting} if $X$ lies in $\Psi(\Rep_\mathscr{I}(AQ_n/I))$.
\end{itemize}
An $A$-module $M$ is said to be
\begin{itemize}
\item $(n-1)$-\textbf{tilting} if it has projective dimension at most $n-1$, $\Ext_A^i(M,M^{(J)})=0$ for all $i>0$ and all sets $J$, and there is an exact sequence of $A$-modules with all $M_i$ in $\Add(M)$
$$\xymatrix{0\ar[r] & A\ar[r] & M_0\ar[r] & M_1\ar[r] & ...\ar[r] & M_{n-1}\ar[r] & 0.}$$
\item  $(n-1)$-\textbf{cotilting} if it has injective dimension at most $n-1$, $\Ext_A^i(M^{J},M)=0$ for all $i>0$ and all sets $J$, and there is an exact sequence of $A$-modules with all $M_i$ in $\Prod(M)$ $$\xymatrix{0\ar[r] & M_{n-1}\ar[r] & ...\ar[r] & M_1\ar[r] & M_0\ar[r] & A^+\ar[r] & 0.}$$
\end{itemize}
We say that two silting (respectively, cosilting) complexes $X$ and $X^\prime$ are \textbf{equivalent} if $\Add(X)=\Add(X')$ (respectively, $\Prod(X)=\Prod(X')$). Similarly, two tilting (respectively, cotilting) modules $M$ and $M^\prime$ are \textbf{equivalent} if $\Add(M)=\Add(M^\prime)$ (respectively, $\Prod(M)=\Prod(M^\prime)$).
\end{definition}

It can be checked that an $A$-module is $(n-1)$-tilting if and only if the corresponding stalk complex concentrated in degree $0$ is $n$-silting. Dually, an $A$-module is $(n-1)$-cotilting if and only if the corresponding stalk complex concentrated in degree $-n+1$ is $n$-cosilting. 

\begin{example}\label{example proj inj}
Consider the ring $AQ_n/I$ and the injective cogenerator $T$ of $\Rep_\mathscr{P}(AQ_n/I)$ given by
the direct sum of all $A^j$ for $j\in\{-n+1,...,0\}$.
It follows from Lemma \ref{properties CnA} that $T$ is an $(n-1)$-tilting module over $AQ_n/I$. There is an associated tilting cotorsion pair in $\Rep(AQ_n/I)$ given by $(^\perp(T^\perp),T^\perp)=(^{\perp_1}(\Scal^{\perp_1}),\Scal^{\perp_1})$ where $\Scal$ denotes a set of syzygies of $T$, namely $\Scal=\{\Omega^m(T)\mid m\geq 0\}$ with $\Omega^0(T)=T$. Thus, $\Scal$ contains precisely the contractible projectives of the form $A^j$ and the representations of $\Rep(AQ_n/I)$ which have the regular $A$-module in one vertex and zero in all others.
Using \cite[Corollary 3.2.4]{GT}, it follows that the left hand side of the above cotorsion pair $^{\perp_1}(\Scal^{\perp_1})$ is given by all the representations in $\Rep(AQ_n/I)$ that are direct summands of $\Scal$-filtered objects. Therefore, we get that $^{\perp_1}(\Scal^{\perp_1})=\Rep_\mathscr{P}(AQ_n/I)$.

Dually, consider the $(n-1)$-cotilting module $C$ in $\Rep(AQ_n/I)$  given by the direct sum of all $(A^+)_j$ for $j\in\{-n+1,...,0\}$. The associated cotorsion pair in $\Rep(AQ_n/I)$ is of the form $(^\perp C,(^\perp C)^\perp)$ with $(^\perp C)^\perp=\Rep_\mathscr{I}(AQ_n/I)$. First observe that the class $^\perp C=^\perp\!\!\!(A^+)_0$ consists precisely of all representations $M$ in $\Rep(AQ_n/I)$ for which the cohomologies of $\Theta(M)$ vanish in all negative degrees. Thus, we get $\Rep_\mathscr{I}(AQ_n/I)\subseteq(^\perp C)^\perp$. For the converse, consider $M$ in $(^\perp C)^\perp$ and suppose that $M(0)$ is a non-injective $A$-module. Then we can use the injective envelope $E$ of $M(0)$ in $A\Mod$ to construct a non-trivial extension in $\Ext_{AQ_n/I}^1((E/M(0))_0,M)$ where $(E/M(0))_0$ clearly lies in $^\perp C$. Hence, $M(0)$ must be injective. Now we proceed inductively to prove the claim.
\end{example}

The next theorem identifies (co)silting complexes in $\D(A)$ with certain (co)tilting modules over $AQ_n/I$.

\begin{theorem}\label{silting is tilting}
The functor $\Psi:\Rep(AQ_n/I)\longrightarrow \D(A)$ induces a bijection between tilting modules over $AQ_n/I$ that lie in $\Rep_\mathscr{P}(AQ_n/I)$ and $n$-silting complexes in $\D(A)$, up to equivalence. Dually, $\Psi$ induces a bijection between cotilting modules over $AQ_n/I$ that lie in $\Rep_\mathscr{I}(AQ_n/I)$ and $n$-cosilting complexes in $\D(A)$, up to equivalence. 
\end{theorem}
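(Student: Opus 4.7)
The strategy is to translate each clause in the silting/tilting definition through the functor $\Psi$; the cosilting/cotilting half is obtained by dualising throughout. Fix $M$ in $\Rep_{\mathscr{P}}(AQ_n/I)$. By Lemma \ref{ff homological} together with the fact that $\Psi$ preserves coproducts, one has $\Ext^{i}_{AQ_n/I}(M, M^{(J)}) \cong \Hom_{\D(A)}(\Psi(M), \Psi(M)^{(J)}[i])$ for all $i>0$ and all sets $J$, so the Ext-orthogonality clauses of the two definitions correspond directly. Lemma \ref{properties CnA}(6) makes the projective-dimension bound in the tilting definition automatic. The only remaining content is to relate the tilting coresolution of $R := AQ_n/I$ to the thick-subcategory condition $\thick(\Add(\Psi(M))) = \K^{b}(A\Proj)$, and for this the key calculation is $\Psi(R) \cong A$ in $\D(A)$: the decomposition $R = \bigoplus_{j=-n+1}^{0} P(j)$ into indecomposable projectives exhibits, via Lemma \ref{properties CnA}(2), each $P(j)$ with $j \neq 0$ as a contractible projective (hence zero in $\D(A)$), while $P(0) = A_{0}$ represents the stalk complex $A$.

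For the implication tilting $\Rightarrow$ silting, apply $\Psi$ to a tilting coresolution $0 \to R \to M_{0} \to \cdots \to M_{n-1} \to 0$ of $R$ by $\Add(M)$. Since all terms lie in $\Rep_{\mathscr{P}}(AQ_n/I)$, the short exact sequences split componentwise in $\C(A)$, and via the identification of the stable category of $\C_{s}(A)$ with $\K(A)$ they become distinguished triangles in $\K(A)$ and hence in $\D(A)$. The resulting tower exhibits $A = \Psi(R)$ inside $\thick(\Add(\Psi(M)))$; combined with the obvious $\Psi(M) \in \K^{b}(A\Proj)$, this gives the required equality of thick subcategories.

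The converse silting $\Rightarrow$ tilting is the main obstacle, since we must manufacture a coresolution of $R$ from the thick-generation condition. The plan is to invoke a Bongartz-style completion. Because $M$ is already a partial tilting module over $AQ_n/I$ (Ext-self-orthogonality plus bounded projective dimension), it completes to a genuine $(n-1)$-tilting module $M \oplus M^{*}$ with $M^{*}$ still in $\Rep_{\mathscr{P}}(AQ_n/I)$. The forward direction then makes $\Psi(M) \oplus \Psi(M^{*})$ into an $n$-silting complex, and the Ext-orthogonality of the tilting module $M \oplus M^{*}$ translates into the statement that $\Psi(M^{*}) \in \K^{b}(A\Proj)$ is two-sided Ext-orthogonal to $\Psi(M)$. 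A standard argument identifying $\Add(\Psi(M))$ with the coheart of the associated silting co-t-structure then forces $\Psi(M^{*}) \in \Add(\Psi(M))$, so $\Psi(M)$ and $\Psi(M \oplus M^{*})$ are Add-equivalent, and the coresolution of $R$ by $\Add(M \oplus M^{*})$ descends to an $\Add(M)$-coresolution at the level of equivalence classes. The bijection up to equivalence follows because Add-equivalence of tilting modules in $\Rep_{\mathscr{P}}(AQ_n/I)$ is determined by the right Ext-perpendicular class, which Lemma \ref{ff homological} identifies with the silting class $\Psi(M)^{\perp_{>0}}$.
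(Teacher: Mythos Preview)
Your forward direction (tilting $\Rightarrow$ silting) matches the paper's argument. The differences, and the gaps, are in the converse and in the injectivity step.

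For silting $\Rightarrow$ tilting the paper does not use a Bongartz completion; it invokes \cite[Theorem~3.5]{Wei0}, which directly furnishes an $\Add(\Psi(M))$-coresolution of $A$ in $\D(A)$, and then lifts the resulting triangles in $\K(A\Proj)$ to short exact sequences in $\Rep_{\mathscr P}(AQ_n/I)$ after adding contractible projectives. Your route via a Bongartz complement $M^{*}$ is a reasonable alternative, but the key step is not justified. The coheart of the silting co-t-structure associated to $T=\Psi(M)$ is $\Add(T)=T^{\perp_{>0}}\cap{}^{\perp_{>0}}\bigl(T^{\perp_{>0}}\bigr)$; knowing only that $\Psi(M^{*})$ is two-sided Ext-orthogonal to $T$ places it in $T^{\perp_{>0}}\cap{}^{\perp_{>0}}T$, which is a priori larger. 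What you actually need is the statement that a silting complex is maximal among presilting objects of $\K^{b}(A\Proj)$, and in this generality that is essentially the content of the result from \cite{Wei0} that the paper cites. So the detour through Bongartz does not avoid Wei's theorem; it repackages it.

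There is a second gap: even once you know $\Psi(M^{*})\in\Add(\Psi(M))$ in $\D(A)$, you still have to conclude that $M^{*}\in\Add(M)$ in $\Rep(AQ_n/I)$ in order to get an honest $\Add(M)$-coresolution of $R$. This only holds after you arrange that all contractible projectives lie in $\Add(M)$, exactly as the paper does (``assume, without loss of generality, that all the contractible projective objects lie in $\mathsf{Add}(T)$''). Your phrase ``descends to an $\Add(M)$-coresolution at the level of equivalence classes'' hides this point. Relatedly, your injectivity argument does not go through as stated: Lemma~\ref{ff homological} computes $\Ext_{AQ_n/I}^{j}(M,N)$ only when $M\in\Rep_{\mathscr P}$ or $N\in\Rep_{\mathscr I}$, so it does not identify the full tilting class $M^{\perp}\subseteq\Rep(AQ_n/I)$ with the silting class $\Psi(M)^{\perp_{>0}}\subseteq\D(A)$. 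The paper instead proves injectivity directly: if $\Add(\Psi(T))=\Add(\Psi(V))$ and $X\in\Add(T)\setminus\Add(V)$, then $\Psi(X)=0$, so $X$ is contractible projective, hence already in $\Add(V)$ by Lemma~\ref{properties CnA}.
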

\begin{proof}
We prove the silting case; the cosilting case is dual. First observe that the assignment is well-defined. Let $T$ be a tilting module of $\Rep(AQ_n/I)$ lying in $\Rep_\mathscr{P}(AQ_n/I)$. We show that $\Psi(T)$ is a silting complex in $\mathsf{D}(A)$. It follows from Lemma \ref{ff homological} that $\Hom_{\D(A)}(\Psi(T),\Psi(T)^{(J)}[i])=0$ for all $i>0$ and all sets $J$. Moreover, the coresolution of $AQ_n/I$ by $\mathsf{Add}(T)$ in $\Rep(AQ_n/I)$ shows that $\Psi(AQ_n/I)$ (which is isomorphic to $A$ in $\D(A)$) lies in $\mathsf{thick}(\Add(\Psi(T)))$ and, thus, $\mathsf{thick}(\Add(\Psi(T)))=\mathsf{K}^b(A\Proj)$.

We show that the assignment is injective. Indeed, let $T$ and $V$ be tilting modules in $\Rep(AQ_n/I)$ lying in $\Rep_\mathscr{P}(AQ_n/I)$ such that $\Psi(T)$ is equivalent to $\Psi(V)$. Suppose, without loss of generality, that there is an object $X$ in $\mathsf{Add}(T)$ that does not lie in $\mathsf{Add}(V)$. It is then clear that $\Psi(X)=0$ and, thus, $X$ is a contractible projective object in $\Rep_\mathscr{P}(AQ_n/I)$. However, it follows from Lemma \ref{properties CnA} that every such contractible projective, being both injective and projective in $\Rep_\mathscr{P}(AQ_n/I)$, must lie in $\mathsf{Add}(V)$, yielding a contradiction.

Finally, it remains to see that the assignment is surjective. By definition, any given $n$-silting complex lies in $\Psi(\Rep_\mathscr{P}(AQ_n/I))$. Let $T$ be an object in $\Rep_\mathscr{P}(AQ_n/I)$ representing it and assume, without loss of generality, that all the contractible projective objects lie in $\mathsf{Add}(T)$. We show that $T$ is an $(n-1)$-tilting module in $\Rep(AQ_n/I)$. It is clear from Lemma \ref{properties CnA} that $T$ has projective dimension at most $n-1$ and using Lemma \ref{ff homological} it follows that $\Ext^i_{AQ_n/I}(T,T^{(J)})=0$ for all $i>0$ and all sets $J$. It only remains to show the existence of an $\Add(T)$-coresolution for $AQ_n/I$. But this follows from \cite[Theorem 3.5]{Wei0}. Indeed, the coresolution of $A$ by $\Add(\Psi(T))$ in the derived category induces a coresolution of $A_0$ by $\Add(T)$ in $\Rep(AQ_n/I)$, since such triangles in $\K(A\Proj)$ induce short exact sequences in $\Rep_\mathscr{P}(AQ_n/I)$ once we add suitable contractible projective objects. It then remains to observe that the further summands of $AQ_n/I$ are contractible projective objects that already lie in $\Add(T)$.
\end{proof}

\begin{remark}
If the ring $A$ has finitistic dimension zero, then every tilting (respectively, cotilting) module in $\Rep(AQ_n/I)$ must already belong to $\Rep_\mathscr{P}(AQ_n/I)$ (respectively, to $\Rep_\mathscr{I}(AQ_n/I)$). In this case, $\Psi$ induces a bijection between all tilting (respectively, cotilting) modules over $AQ_n/I$ and all $n$-silting (respectively, $n$-cosilting) complexes in $\D(A)$, up to equivalence.
\end{remark}

The theorem above introduces the possibility of using results from tilting theory to prove statements about silting complexes. An easy application is to show that any partial $n$-(co)silting complex (suitably defined) admits a complement and can be completed to an $n$-(co)silting complex. This uses the well-known theory of complements for partial (co)tilting modules developed in \cite{AC}. In a more general setting, the existence of complements for partial silting objects will be discussed in \cite{APV}. In the next section, however, we focus on using the theorem above towards our aim of characterising silting and cosilting classes.

%%%%%%%%%%%%%%%%%%%%%%%%%%%%%%%%%%%%%%%%%%%%%%%%%%%%%%%%%%%%

\section{Silting and Cosilting Classes}

\subsection{Torsion pairs in derived categories} We begin with a quick recollection of the necessary concepts.

\begin{definition}\label{def tp}
A pair of subcategories $(\Ucal,\Vcal)$ in $\D(A)$ is said to be a \textbf{torsion pair} if
\begin{enumerate}
\item $\Ucal$ and $\Vcal$ are closed under summands;
\item $\Hom_{\D(A)}(\Ucal,\Vcal)=0$;
\item For every object $X$ of $\D(A)$, there are $U$ in $\Ucal$, $V$ in $\Vcal$ and a triangle
$$U\longrightarrow X\longrightarrow V\longrightarrow U[1].$$
\end{enumerate}
In a torsion pair $(\Ucal,\Vcal)$, the class $\Ucal$ is said to be the \textbf{aisle}, the class $\Vcal$ the \textbf{coaisle}, and $(\Ucal,\Vcal)$ is called
\begin{itemize}
\item a \textbf{t-structure} if $\Ucal[1]\subseteq \Ucal$;
\item a \textbf{co-t-structure} if $\Ucal[-1]\subseteq \Ucal$;
\item \textbf{generated by a set of objects $\Scal$} if $(\Ucal,\Vcal)=({}^{\perp_0}(\Scal^{\perp_0}),\Scal^{\perp_0})$; 
\item \textbf{compactly generated} if $(\Ucal,\Vcal)$ is generated by a set of compact objects.
\end{itemize}
Finally, we say that a triple $(\Ucal,\Vcal,\Wcal)$ is a \textbf{torsion-torsionfree triple} (TTF triple for short) if both $(\Ucal,\Vcal)$ and $(\Vcal,\Wcal)$ are torsion pairs. In that case we say that $\Vcal$ is a \textbf{TTF class}, $(\Ucal,\Vcal)$ is \textbf{left adjacent} to $(\Vcal,\Wcal)$ and $(\Vcal,\Wcal)$ is \textbf{right adjacent} to $(\Ucal,\Vcal)$.
\end{definition}

There is a well-known \textbf{standard t-structure} in $\D(A)$ that we denote by $(\D^{\leq 0},\D^{\geq 1})$ and, as it is common in the literature, we write $\D^{\leq n}:=\D^{\leq 0}[-n]$ and $\D^{\geq n}:=\D^{\geq 1}[-n+1]$. The class $\D^{\leq 0}$ (respectively, $\D^{\geq 1}$) consists of all objects with cohomologies vanishing in positive (respectively, in non-positive) degrees. Moreover, this t-structure has both a left and a right adjacent co-t-structure, turning both $\D^{\leq 0}$ and $\D^{\geq 1}$ into TTF classes. For example, the class $\K_{\geq 1}:={}^{\perp_0}(\D^{\leq 0})$ consists of all complexes of projective $A$-modules whose non-positive components are zero. The co-t-structure $(\K_{\geq 1},\D^{\leq 0})$ will be referred to as the \textbf{standard co-t-structure} (see \cite[Example 2.9]{AMV1} and \cite[Example 2.4(1)]{AMV3} for further details).

\begin{definition}
We say that a subcategory $\Vcal$ of $\D(A)$ is \textbf{suspended} (respectively, \textbf{cosuspended}) if $\Vcal$ is closed under extensions and  $\Vcal[1]\subseteq \Vcal$ (respectively, $\Vcal[-1]\subseteq \Vcal$). Given a suspended (respectively, cosuspended) subcategory $\Vcal$, we say that $\Vcal$ is \textbf{intermediate} (respectively,  \textbf{co-intermediate}) if there are integers $n\leq m$ such that $\D^{\leq n}\subseteq \Vcal\subseteq \D^{\leq m}$ (respectively, $\D^{\geq m}\subseteq \Vcal\subseteq \D^{\geq n}$).
\end{definition}

\begin{example}\label{ex TTF}
Given a set $\Scal$ of compact objects in $\D(A)$, the pair $({}^{\perp_0}(\Scal^{\perp_0}), \Scal^{\perp_0})$ is always a torsion pair (see \cite[Theorem 4.3]{AI}) and, in fact, it follows from \cite[Theorem 3.11]{PS} that $\Scal^{\perp_0}$ is a TTF class. In particular, subcategories of the form $\Scal^{\perp_{>0}}=\{S[n]\mid S\in\Scal, n< 0\}^{\perp_0}$ are examples of suspended TTF classes.
\end{example}

In this section, we aim to extend the characterisation of tilting and cotilting classes in module categories, as recalled in the introduction, to the context of silting and cosilting classes in derived categories. The latter are defined as follows.

\begin{definition}
If $T$ is a silting complex in $\D(A)$, then we say that $T^{\perp_{>0}}$ is a \textbf{silting class}. If $C$ is a cosilting complex in $\D(A)$, then we say that ${}^{\perp_{>0}}C$ is a \textbf{cosilting class}.
\end{definition}

It is well known that a silting (respectively, cosilting) class determines a unique silting (respectively, cosilting) complex up to equivalence (see \cite[Theorem 5.3]{Wei0} and \cite[Theorem 2.17]{WeiZhang} for details).

%%%%%%%%%%%%%%%%%%%%%%%%%%%%%%%%%%%%%%%%%%%%%%%%%%%%

\subsection{The silting case}
Recall from \cite{AMV1} that every silting class is the aisle of a t-structure and the coaisle of a co-t-structure or, in other words, that silting classes are suspended TTF classes. 

\begin{theorem}\cite[Theorem 4.6]{AMV1}\label{bij AMV1}
The assignment sending a silting complex $T$ in $\D(A)$ to its silting class $T^{\perp_{>0}}$ yields a bijection between silting complexes up to equivalence and intermediate suspended TTF classes.
\end{theorem}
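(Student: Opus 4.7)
The plan is to verify that the assignment $T\mapsto T^{\perp_{>0}}$ is well-defined, injective, and surjective onto intermediate suspended TTF classes. The guiding idea is that a silting complex $T$ generates a TTF triple $(\Ucal,T^{\perp_{>0}},\Wcal)$ in which $T^{\perp_{>0}}$ is simultaneously the coaisle of a co-t-structure on the left and the aisle of a t-structure on the right, and that $\Add(T)$ can be recovered intrinsically from this triple as the co-heart of the left-adjacent co-t-structure.

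For well-definedness, closure of $T^{\perp_{>0}}$ under $[1]$ and under extensions is immediate from the long exact sequence associated to $\Hom_{\D(A)}(T,-)$ together with the self-orthogonality $\Hom_{\D(A)}(T,T^{(J)}[i])=0$ for $i>0$. To exhibit the TTF structure I would work with the set $\Scal=\{T[i]\mid i<0\}$, for which $\Scal^{\perp_0}=T^{\perp_{>0}}$ by definition: standard results on torsion pairs generated by sets of objects produce a right-adjacent t-structure $(T^{\perp_{>0}},\Wcal)$ (cf.\ Example \ref{ex TTF} for the compactly generated prototype), while the machinery of Bondarko on co-t-structures attached to silting data produces a left-adjacent co-t-structure $(\Ucal,T^{\perp_{>0}})$. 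Intermediateness follows from boundedness of $T\in\K^b(A\Proj)$: writing $T\in\D^{[-a,b]}$ yields $\D^{\leq -b-1}\subseteq T^{\perp_{>0}}$, while the condition $A\in\thick(\Add(T))$ produces, by a d\'evissage along the thick closure, an upper bound $T^{\perp_{>0}}\subseteq\D^{\leq m}$.

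For injectivity I would show that $\Add(T)$ can be read off from the TTF triple alone as the co-heart of the left-adjacent co-t-structure. The silting axiom implies directly that $T$ lies in the co-heart; for the reverse inclusion, I would apply the co-t-structure truncation triangles to an arbitrary co-heart object $X$, use the condition $A\in\thick(\Add(T))$ to recognise $X$ as lying in the closure of $\Add(T)$ under suitable operations, and then use the Hom-orthogonality defining the co-heart to force $X$ to be a direct summand of a coproduct of copies of $T$. Since the co-heart depends only on $T^{\perp_{>0}}$, any two silting complexes with the same silting class are equivalent.

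Surjectivity is the main obstacle. Given an intermediate suspended TTF class $\Vcal$ with left-adjacent co-t-structure $(\Ucal,\Vcal)$, the intermediateness $\D^{\leq n}\subseteq\Vcal\subseteq\D^{\leq m}$ translates into a boundedness condition on the co-t-structure in Bondarko's sense; in particular, $A$ admits a finite weight decomposition whose successive cones lie in the co-heart $\Ccal$. Assembling these pieces via iterated cones, I would extract a candidate $T\in\Ccal$ with $A\in\thick(\Add(T))$, whence $\thick(\Add(T))=\K^b(A\Proj)$; self-orthogonality follows automatically from $T\in\Ccal$ and the defining Hom-vanishing property of the co-heart, while the equality $T^{\perp_{>0}}=\Vcal$ reduces to the fact that both classes arise as the unique middle term of the TTF triple generated by $\{T[i]\mid i<0\}$. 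The delicate step is producing a \emph{single} complex $T$ whose additive closure reproduces the co-heart and whose generated torsion pairs reproduce the prescribed TTF; intermediateness is precisely the hypothesis that supplies the finiteness needed to perform this extraction.
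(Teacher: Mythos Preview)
This theorem is not proved in the present paper; it is quoted verbatim from \cite[Theorem~4.6]{AMV1} as an external input. So there is no proof here to compare your proposal against, and your sketch is effectively an attempt to reprove that cited result.

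That said, your sketch has a genuine gap in the well-definedness step. You invoke Example~\ref{ex TTF} and ``standard results on torsion pairs generated by sets of objects'' to obtain the t-structure $(T^{\perp_{>0}},\Wcal)$ and then ``Bondarko's machinery'' for the left-adjacent co-t-structure. But Example~\ref{ex TTF} requires the generating set $\Scal$ to consist of \emph{compact} objects, and a silting complex $T$ is in general not compact; indeed, the whole point of Theorem~\ref{thm silting class} in this paper is to establish \emph{a posteriori} that the associated co-t-structure is compactly generated, so you cannot assume this at the outset without circularity. Likewise, the Bondarko result you allude to (presumably \cite{Bondarko}) is invoked in this paper only for the cosilting case, where it supplies a right-adjacent co-t-structure given a t-structure with Grothendieck heart; it is not clear which statement you mean to apply on the silting side. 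In \cite{AMV1} the co-t-structure $({}^{\perp_0}(T^{\perp_{>0}}),T^{\perp_{>0}})$ is constructed directly: one uses that $T^{\perp_{>0}}$ is closed under coproducts to see that $\Add(T)$ is precovering, builds an $\Add(T)$-resolution of an arbitrary object, and realises the co-t-structure truncation as a Milnor colimit of the partial resolutions; the right-adjacent t-structure is then obtained from the standard one via intermediateness. Your outline of injectivity (recovering $\Add(T)$ as the co-heart) and of surjectivity (extracting $T$ from a finite weight decomposition of $A$) is, modulo terminology, the argument actually given in \cite{AMV1}.
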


The following result shows that the co-t-structures in $\D(A)$ associated to silting complexes are precisely those which are compactly generated and have an intermediate coaisle. This constitutes the first half of our main theorem and it generalises the finite-type characterisation of tilting classes in module categories.

\begin{theorem}\label{thm silting class}
A subcategory $\Vcal$ of $\D(A)$ is a silting class if and only if $\Vcal$ is intermediate and $\Vcal=\Scal^{\perp_{>0}}$ for a set $\Scal$ of compact objects. Moreover, the assignment $T\mapsto {}^{\perp_0}(T^{\perp_{>0}})\cap \K^b(A\proj)$ yields a bijection between silting complexes up to equivalence and cosuspended subcategories $\mathcal{R}$ of $\K^b(A\proj)$ which are co-intermediate in the sense that  there are integers $n\leq m$ such that $\K_{\geq m}\cap \K^b(A\proj)\subseteq \mathcal{R} \subseteq \K_{\geq n}\cap \K^b(A\proj)$.
\end{theorem}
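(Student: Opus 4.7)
The theorem has two parts: the ``iff'' characterisation of silting classes, and a bijection between silting complexes (up to equivalence) and cosuspended, co-intermediate subcategories of $\K^b(A\proj)$.

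The easier direction of the characterisation is formal. If $\Vcal = \Scal^{\perp_{>0}}$ with $\Scal$ a set of compact objects and $\Vcal$ is intermediate, then $\Vcal = \{S[j] : S \in \Scal,\ j < 0\}^{\perp_0}$ is a suspended TTF class by Example~\ref{ex TTF}; combined with intermediateness, Theorem~\ref{bij AMV1} identifies $\Vcal$ as a silting class. The converse is the heart of the argument and proceeds by reducing silting over $A$ to tilting over $B := AQ_n/I$. Given a silting complex $T$, boundedness ensures $T$ is $n$-silting for some $n$; by Theorem~\ref{silting is tilting}, $T$ corresponds to a tilting module $\tilde T \in \Rep_\mathscr{P}(AQ_n/I)$. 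The finite-type characterisation of tilting classes from \cite{BS} yields a set $\Scal_0 \subseteq B\mod$ of bounded projective dimension with $\tilde T^\perp = \Scal_0^\perp$ in $B\Mod$. For each $S \in \Scal_0$, fix a bounded projective resolution $P^\bullet_S \in \K^b(B\proj) \subseteq \K^b(\Rep_\mathscr{P}(AQ_n/I))$ and totalise the bicomplex $\Theta(P^\bullet_S)$, producing a compact object $\hat S \in \K^b(A\proj)$. A standard spectral-sequence argument, exploiting that $\Theta(P_j)$ is acyclic for $j \neq 0$ and that each $P^\bullet_S(j)$ is a projective $A$-resolution of $S(j)$, identifies $\hat S$ with $\Psi(S)$ in $\D(A)$.

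Combining Lemma~\ref{ff homological} (applied to $\tilde T \in \Rep_\mathscr{P}$) with this identification translates $\tilde T^\perp = \Scal_0^\perp$ into the equivalence $\Psi(M) \in T^{\perp_{>0}} \Longleftrightarrow \Psi(M) \in \{\hat S : S \in \Scal_0\}^{\perp_{>0}}$ for every $M \in B\Mod$. The main obstacle I anticipate is extending this equivalence from $\Psi(B\Mod)$ to arbitrary $X \in \D(A)$. I plan to exploit that both classes are suspended, closed under products and coproducts, and contain $\D^{\leq -N}$ for a common $N$ depending on $n$ and the projective-dimension bound for $\Scal_0$; replacing $n$ by sufficiently large $n'$ (keeping $T$ $n'$-silting) enlarges the reach of $\Psi$ so that the relevant truncations of any $X$ fall into $\Psi(\Rep_\mathscr{P}(AQ_{n'}/I))$, and performing the tilting translation at level $n'$ upgrades the equivalence to the global equality $T^{\perp_{>0}} = \{\hat S : S \in \Scal_0\}^{\perp_{>0}}$.

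For the bijection, set $\mathcal{R}_T := {}^{\perp_0}(T^{\perp_{>0}}) \cap \K^b(A\proj)$, the compact part of the cosuspended aisle adjacent to the silting class. Cosuspendedness is immediate from the co-t-structure condition, and co-intermediateness translates the inclusions $\D^{\leq n} \subseteq T^{\perp_{>0}} \subseteq \D^{\leq m}$ via the identity ${}^{\perp_0}(\D^{\leq k}) = \K_{\geq k+1}$ into $\K_{\geq m+1} \cap \K^b(A\proj) \subseteq \mathcal{R}_T \subseteq \K_{\geq n+1} \cap \K^b(A\proj)$. Conversely, given a cosuspended co-intermediate $\mathcal{R}$, the cosuspendedness of $\mathcal{R}$ gives $\mathcal{R}^{\perp_0} = \mathcal{R}[1]^{\perp_{>0}}$, and the characterisation then produces a silting complex $T_\mathcal{R}$. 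Mutual inversion goes as follows: from $T$, each $\hat S[-i]$ for $S \in \Scal_0$ and $i>0$ lies in $\mathcal{R}_T$ (using $T^{\perp_{>0}} = \{\hat S\}^{\perp_{>0}}$), so $\mathcal{R}_T^{\perp_0} \subseteq \{\hat S[-i]\}^{\perp_0} = T^{\perp_{>0}}$, and the reverse inclusion from the torsion pair axiom gives $\mathcal{R}_T^{\perp_0} = T^{\perp_{>0}}$, whence $T_{\mathcal{R}_T} \sim T$ by Theorem~\ref{bij AMV1}; from $\mathcal{R}$, the inclusion $\mathcal{R} \subseteq \mathcal{R}_{T_\mathcal{R}}$ is tautological, while the reverse uses that $\mathcal{R}$ is cosuspended and summand-closed in $\K^b(A\proj)$ together with co-intermediateness to force any compact in the aisle ${}^{\perp_0}(\mathcal{R}^{\perp_0})$ already to belong to $\mathcal{R}$.
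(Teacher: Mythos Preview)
Your overall strategy matches the paper's: pass from $n$-silting in $\D(A)$ to tilting over $B=AQ_n/I$ via Theorem~\ref{silting is tilting}, invoke the finite-type theorem of \cite{BS}, and transport the resulting set of compact $B$-modules back to $\D(A)$. The paper, however, short-circuits the two steps you identify as delicate.

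First, you propose realising each $S\in\Scal_0$ as a compact object of $\D(A)$ by resolving over $B$ and totalising a bicomplex. The paper avoids this: since ${}^{\perp}(\tilde T^{\,\perp})\subseteq\Rep_\mathscr{P}(AQ_n/I)$ (a consequence of Example~\ref{example proj inj} together with $\tilde T\in\Rep_\mathscr{P}$), one may simply take $\mathscr{S}$ to consist of all compact $B$-modules in ${}^{\perp}(\tilde T^{\,\perp})$. Each such $S$ then already has finitely generated projective $A$-components, so $\Psi(S)\in\K^b(A\proj)$ directly. This also makes Lemma~\ref{ff homological} applicable to every pair $(S,M)$, yielding $\Ext^j_B(S,M)\cong\Hom_{\D(A)}(\Psi(S),\Psi(M)[j])$ without further work; in your setup that identification still needs justification when $S\notin\Rep_\mathscr{P}$.

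Second, your plan to enlarge $n$ in order to bridge from $\Psi(B\Mod)$ to all of $\D(A)$ is replaced in the paper by a single truncation. With $\mathscr{S}$ chosen as above, the stalk $A_0$ lies in $\mathscr{S}$, forcing $\Scal^{\perp_{>0}}\subseteq\D^{\leq 0}$; and since every object of $\Scal\cup\{\Psi(\tilde T)\}$ is a complex concentrated in degrees $-n{+}1,\dots,0$, both $\Scal^{\perp_{>0}}$ and $\Psi(\tilde T)^{\perp_{>0}}$ contain $\D^{\leq -n+1}$. For any $X\in\D^{\leq 0}$ the standard truncation triangle $\tau_{\leq -n}X\to X\to\tau_{\geq -n+1}X$ then reduces membership in either orthogonal to that of $\tau_{\geq -n+1}X$, which lies in the essential image of $\Psi$ and is handled by Lemma~\ref{ff homological} and the equality $\mathscr{S}^\perp=\tilde T^{\,\perp}$.

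For the bijection, the paper simply cites \cite[Theorem~4.5(ii)]{PS}, which says that a compactly generated co-t-structure is determined by the intersection of its aisle with the compacts; your explicit argument is essentially an unpacking of that statement.
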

\begin{proof}
Suppose that $\Vcal$ is intermediate and that $\Vcal=\Scal^{\perp_{>0}}$ for a set $\Scal$ of compact objects. It then follows from Example \ref{ex TTF} that $\Vcal$ is a suspended TTF class and, therefore, by the theorem above, $\Vcal$ is a silting class.

Conversely, suppose that $\Vcal$ is a silting class. Without loss of generality, we assume that the associated silting complex is $n$-silting, i.e. it lies in $\Psi(\Rep_\mathscr{P}(AQ_n/I))$. From the proof of Theorem \ref{bij AMV1}, it then follows that the class $\Vcal$ is intermediate with $\D^{\leq -n+1}\subseteq\Vcal\subseteq\D^{\leq 0}$. Moreover, by Theorem \ref{silting is tilting}, there is a tilting $AQ_n/I$-module $T$ in $\Rep_\mathscr{P}(AQ_n/I)$ representing our silting complex. From the main theorem in \cite{BS} we infer that the tilting class $T^{\perp}$ in $\Rep(AQ_n/I)$ is of finite type, that is, there is a set $\mathscr{S}$ of compact modules over $AQ_n/I$ (i.e. modules admitting a finite resolution by finitely generated projective $AQ_n/I$-modules) such that $\mathscr{S}^{\perp}=T^{\perp}$. In fact, we can choose $\mathscr{S}$ to contain all compact modules from $^\perp(T^{\perp})$. Since $T$ lies in $\Rep_\mathscr{P}(AQ_n/I)$, by Example \ref{example proj inj}, it follows that also $\mathscr{S}$ lies in $\Rep_\mathscr{P}(AQ_n/I)$. Therefore, we can define the set $\Scal:=\Psi(\mathscr{S})$ of compact objects in $\D(A)$ for which $\Scal^{\perp_{>0}}=\Psi(T)^{\perp_{>0}}=\Vcal$, by Lemma \ref{ff homological}. Indeed, by construction, the stalk complex with the regular module $A$ in degree zero belongs to $\Scal$, guaranteeing that $\Scal^{\perp_{>0}}$ is contained in $\D^{\leq 0}$, and that $\Scal^{\perp_{>0}}$ is intermediate as a consequence. Hence, computing the orthogonal classes $\Scal^{\perp_{>0}}$ and $\Psi(T)^{\perp_{>0}}$ in $\D(A)$ boils down to computing the corresponding Ext-orthogonal classes $\mathscr{S}^\perp$ and $T^\perp$ in $\Rep(AQ_n/I)$, which coincide by construction.

The final statement follows from the fact that a compactly generated co-t-structure is determined by the intersection of its aisle with the compact objects (see \cite[Theorem 4.5(ii)]{PS}).
\end{proof}

\begin{remark}
\begin{enumerate}
\item The last statement of Theorem \ref{thm silting class} generalises the correspondence in module categories between tilting modules up to equivalence and resolving subcategories of compact modules of bounded projective dimension (see \cite[Theorem 2.2]{AHT}). Moreover, the proof above identifies the tilting cotorsion pair in $\Rep(AQ_n/I)$ generated by such a resolving subcategory with the silting co-t-structure in the derived category.
\item Note that Theorem \ref{thm silting class} (together with Theorem \ref{bij AMV1}) can be interpreted as a variant of the telescope conjecture (see \cite[Question 3.12]{PS} for an explanation of the \textit{unstable telescope conjecture}). More concretely, it is shown that a co-t-structure in $\D(A)$ with an intermediate coaisle admits a right adjacent t-structure if and only if it is compactly generated.
\end{enumerate}
\end{remark}

%%%%%%%%%%%%%%%%%%%%%%%%%%%%%%%%%%%%%%%%%%%%%%%%%%%%%%%%%%%%%%

\subsection{The cosilting case}
We begin by recalling from \cite{Bel} and \cite{Kr1} some ideas concerning purity in derived categories.
Recall that a morphism $f:X\rightarrow Y$ in $\D(A)$ is said to be a \textbf{pure monomorphism} if $\Hom_{\D(A)}(K,f)$ is a monomorphism for any compact object $K$. An object $C$ is said to be \textbf{pure-injective} if any pure monomorphism $C\rightarrow Y$ splits. The following theorem provides a useful description of pure-injective objects.

\begin{theorem}\cite[Theorem 1.8]{Kr1}\label{Krause}
The following statements are equivalent for an object $C$ in $\D(A)$.
\begin{enumerate}
\item $C$ is pure-injective;
\item For every set $J$, the summation map $C^{(J)}\rightarrow C$ factors through the canonical map $C^{(J)}\rightarrow C^J$.
\end{enumerate}
\end{theorem}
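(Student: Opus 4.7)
The plan is to prove each direction separately, using the standard machinery of purity in compactly generated triangulated categories.

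For $(1)\Rightarrow(2)$, I would first verify that the canonical morphism $\iota_J\colon C^{(J)}\to C^J$ is a pure monomorphism for every set $J$. Indeed, for any compact object $K$, applying $\Hom_{\D(A)}(K,-)$ to $\iota_J$ produces the natural inclusion $\Hom_{\D(A)}(K,C)^{(J)}\hookrightarrow \Hom_{\D(A)}(K,C)^{J}$ of abelian groups. Next I would invoke the standard equivalence (in a triangulated category with arbitrary products and coproducts) between pure-injectivity of $C$, as defined in the paper, and the extension property: every morphism $X\to C$ extends along every pure monomorphism $X\to Y$. This is proved by a homotopy pushout argument: given a pure monomorphism $i\colon X\to Y$ and a morphism $g\colon X\to C$, complete $(i,-g)\colon X\to Y\oplus C$ to a triangle
$$X \xrightarrow{(i,-g)} Y\oplus C \longrightarrow Z \longrightarrow X[1];$$
the induced morphism $C\to Z$ is again a pure monomorphism (since pure monomorphisms are preserved under triangulated cobase change), so the splitting provided by pure-injectivity yields an extension of $g$ to $Y$. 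Applied to the pure monomorphism $\iota_J$ and the summation map $\sigma_J\colon C^{(J)}\to C$, this produces the desired factorization.

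For the converse $(2)\Rightarrow(1)$, the cleanest route is to transport the problem to the Grothendieck abelian category $\mathsf{Mod}(\D(A)^c)$ of additive functors $(\D(A)^c)^{op}\to \mathsf{Ab}$ via the restricted Yoneda functor $h\colon \D(A)\to \mathsf{Mod}(\D(A)^c)$ sending $X$ to $\Hom_{\D(A)}(-,X)|_{\D(A)^c}$. I would use the standard facts that $h$ preserves products, sends coproducts in $\D(A)$ to coproducts in $\mathsf{Mod}(\D(A)^c)$ by compactness, transports pure monomorphisms to monomorphisms, and satisfies the key equivalence that $C$ is pure-injective in $\D(A)$ if and only if $h(C)$ is injective in $\mathsf{Mod}(\D(A)^c)$. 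Under $h$, assumption (2) transports to the analogous factorization statement for $h(C)$ in the functor category, and the conclusion follows from the Jensen--Lenzing-type characterization: in the locally coherent Grothendieck category $\mathsf{Mod}(\D(A)^c)$, an object satisfies this summation-factors-through-product property if and only if it is injective.

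The main obstacle lies in the final step of the reverse direction, namely in rigorously promoting the factorization condition on $h(C)$ to genuine injectivity in $\mathsf{Mod}(\D(A)^c)$, rather than the weaker notion of pure-injectivity in the functor category. This hinges on the locally coherent structure of $\mathsf{Mod}(\D(A)^c)$ and the associated description of pure-exact sequences as filtered colimits of split short exact sequences, to which Baer-type extension arguments can be applied. A more self-contained but technically heavier alternative would avoid the functor category altogether and instead directly construct a retraction of any given pure monomorphism $f\colon C\to Y$, using the factorizations from (2) indexed by a sufficiently large set of morphisms from compact objects into $Y$.
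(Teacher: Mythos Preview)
The paper does not contain a proof of this statement: it is quoted verbatim as \cite[Theorem~1.8]{Kr1} and used as a black box in the proof of Proposition~\ref{cor pure}. So there is no ``paper's own proof'' to compare against; what you have written is, in outline, the argument from Krause's original paper.

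Your sketch of $(1)\Rightarrow(2)$ is correct and complete: the homotopy pushout computation showing that $C\to Z$ is again a pure monomorphism is exactly the right verification, and the splitting then yields the required extension.

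For $(2)\Rightarrow(1)$ you have correctly located the difficulty, but the resolution you indicate is missing its key ingredient. The factorization hypothesis transported along $h$ indeed only gives that $h(C)$ is \emph{pure}-injective in $\mathsf{Mod}(\D(A)^c)$, and locally coherent structure alone does not promote this to injectivity. What closes the gap is the observation that $h(C)$ is automatically \emph{fp-injective} for every object $C$ of $\D(A)$: since $\D(A)^c$ has weak kernels (coming from completing any map to a triangle), every finitely presented functor in $\mathsf{Mod}(\D(A)^c)$ arises as a cokernel of a map of representables, and the cohomological nature of $\Hom_{\D(A)}(-,C)$ on triangles then forces $\Ext^1(F,h(C))=0$ for all finitely presented $F$. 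In a locally coherent Grothendieck category, fp-injective plus pure-injective is equivalent to injective, and that is what yields injectivity of $h(C)$ and hence pure-injectivity of $C$. Without mentioning fp-injectivity of the image of $h$, the step you flag as the ``main obstacle'' remains genuinely open in your sketch.
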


There is an analogous notion of pure-injectivity for categories of modules, and the theorem above also holds in this context (see \cite[Chapter 7]{JL}). Pure-injective objects are intimately related with definable subcategories. We recall the relevant definition.

\begin{definition}
A subcategory $\Vcal$ of $\D(A)$ is said to be \textbf{definable} if there is a set of maps $(\phi_j:X_j\longrightarrow Y_j)_{j\in J}$ in $\mathsf{K}^b(A\proj)$ such that an object $V$ lies in $\Vcal$ if and only if $\Hom_{\D(A)}(\phi_j,V)$ is surjective for all $j$ in $J$.
\end{definition}

Definable subcategories of $\D(A)$ were shown to be preenveloping in \cite{AMV3} and they are well known to satisfy several closure properties, among which the closure under products and coproducts. We will see that cosilting classes are always definable and that they give rise to t-structures with nice homological properties.

\begin{proposition}\label{cor pure}
Let $C$ be a cosilting complex in $\D(A)$. Then $C$ is pure-injective and the cosilting class ${}^{\perp_{>0}}C$ is definable. Moreover, the pair $({}^{\perp_{\leq 0}}C,{}^{\perp_{>0}}C)$ is a t-structure and its heart $\Hcal:={}^{\perp_{\leq 0}}C[-1]\cap {}^{\perp_{>0}}C$ is a Grothendieck abelian category.
\end{proposition}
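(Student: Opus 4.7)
The plan is to transport the properties of the corresponding cotilting module over $AQ_n/I$ across the bijection of Theorem~\ref{silting is tilting}, and then feed pure-injectivity into the known cosilting/definability machinery.

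For pure-injectivity, I would first choose $n>0$ so that $C$ is $n$-cosilting and use Theorem~\ref{silting is tilting} to write $C\simeq\Psi(M)$ for a cotilting module $M\in\Rep_{\mathscr{I}}(AQ_n/I)$. Cotilting modules over any ring are pure-injective, by the theorems of Bazzoni and \v{S}\v{t}ov\'{\i}\v{c}ek recalled in the introduction, so the module-theoretic version of Theorem~\ref{Krause} gives, for every set $J$, a factorisation of the summation map $M^{(J)}\to M$ through the canonical map $M^{(J)}\to M^{J}$. Since $\Psi$ factors as the full embedding $\Theta$ followed by the canonical functor $\C(A)\to\D(A)$, and since products and coproducts in $\Rep(AQ_n/I)$ and in $\C(A)$ are computed componentwise, this factorisation transports to $\D(A)$ (using that bounded complexes of injective modules are K-injective, so the componentwise product of objects in $\Theta(\Rep_{\mathscr{I}}(AQ_n/I))$ computes the derived product). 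Theorem~\ref{Krause} then yields pure-injectivity of $C$.

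The remaining claims exploit pure-injectivity through standard closure properties. The class ${}^{\perp_{>0}}C$ is automatically closed under products and extensions, and is closed under coproducts because $\Hom_{\D(A)}\bigl(\bigoplus_{j}V_{j},\,C[i]\bigr)\cong\prod_{j}\Hom_{\D(A)}(V_{j},C[i])$. Pure-injectivity of $C$ further forces $\Hom_{\D(A)}(-,C[i])$ to send pure monomorphisms to surjections and to kill the Milnor $\lim^{1}$-term along directed systems, so ${}^{\perp_{>0}}C$ is also closed under pure subobjects and directed homotopy colimits. These are the characterising closure conditions of definable subcategories of $\D(A)$ in the sense used in \cite{AMV3}, giving definability. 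For the t-structure, the preenveloping property of definable subcategories from \cite{AMV3} provides $\Prod(C)$-preenvelopes, and combined with the cosuspendedness of ${}^{\perp_{>0}}C$ and the inclusion $\D^{\geq m}\subseteq{}^{\perp_{>0}}C$ for $m$ sufficiently large (a consequence of $\thick(\Prod(C))=\K^{b}(A\Inj)$) these yield the required truncation triangles. The pair $({}^{\perp_{\leq 0}}C,{}^{\perp_{>0}}C)$ is therefore a t-structure; this can equivalently be extracted from \cite{WeiZhang} and \cite{Bondarko}, as announced in the introduction.

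For the heart, the fact that ${}^{\perp_{>0}}C$ is closed under coproducts makes the t-structure smashing, and a standard result on smashing t-structures with definable coaisles in compactly generated triangulated categories (see \cite{AMV3} and the references therein) then gives that $\Hcal$ is Grothendieck abelian. The main obstacle I anticipate is the definability step: specifically, verifying closure of ${}^{\perp_{>0}}C$ under directed homotopy colimits via a careful Milnor-sequence argument from the pure-injectivity of $C$, and matching these closure conditions with the precise $\D(A)$-characterisation of definable subcategories. By contrast, the transfer of pure-injectivity along $\Psi$ is essentially formal, and the truncation triangles and Grothendieck-heart conclusion become invocations of existing results once definability and coproduct-closure are in hand.
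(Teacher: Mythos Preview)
Your pure-injectivity argument is essentially identical to the paper's: both pick $n$, invoke Theorem~\ref{silting is tilting} to get a cotilting $AQ_n/I$-module $M$ with $\Psi(M)\simeq C$, use \v{S}\v{t}ov\'{\i}\v{c}ek's pure-injectivity of cotilting modules, and push the summation-map factorisation of Theorem~\ref{Krause} through $\Psi$ using that products and coproducts are computed componentwise.

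For the remaining claims the paper takes a shorter, more packaged route. Rather than verifying the list of closure properties (pure subobjects, directed homotopy colimits, etc.) and matching them against a closure-characterisation of definability, the paper simply notes that ${}^{\perp_{>0}}C$ is closed under products (citing \cite[Proposition~2.12]{WeiZhang}) and then invokes \cite[Lemma~4.8]{AMV3}, which from pure-injectivity of $C$ together with product-closure of ${}^{\perp_{>0}}C$ delivers \emph{both} definability and the t-structure $({}^{\perp_0}({}^{\perp_{>0}}C),{}^{\perp_{>0}}C)$ in one stroke. The identification of the aisle as ${}^{\perp_{\leq 0}}C$ is then a separate short step via \cite[Theorem~4.9]{AMV3}, and the Grothendieck heart is \cite[Theorem~3.6]{AMV3}. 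Your approach is not wrong, but the step you flagged as the main obstacle---matching your closure conditions to the paper's definition of definability (surjectivity of $\Hom$ on a set of maps in $\K^b(A\proj)$)---is genuinely extra work that \cite{AMV3} does not state in quite that form; the paper sidesteps it entirely by using the ready-made lemma. Your hands-on construction of truncation triangles and the Milnor-$\varprojlim^1$ argument are thus avoidable detours once that lemma is in hand.
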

\begin{proof}
Let $C$ be a cosilting complex in $\mathsf{D}(A)$. We show that it is pure-injective by using Theorem \ref{Krause}. Without loss of generality, by Theorem \ref{silting is tilting}, there is an integer $n>0$ and an $(n-1)$-cotilting module $M$ over $AQ_n/I$ such that $\Psi(M)=C$. Let $J$ be a set and $f_J:M^{(J)}\rightarrow M$ (respectively, $g_J:M^{(J)}\rightarrow M^J$) be the summation map (respectively, the canonical map from the coproduct to the product) of $M$ in $\Rep(AQ_n/I)$. Recall that the maps $f_J$ and $g_J$ are uniquely determined by the universal properties of the product $M^{J}$ and the coproduct $M^{(J)}$. Since $\Psi$ preserves products and coproducts (which are exact and defined componentwise), it follows from the universality of these maps that $\Psi(f_J)$ is the summation map $C^{(J)}\rightarrow C$ and $\Psi(g_J)$ is the canonical map $C^{(J)}\rightarrow C^J$ in $\D(A)$. Since $M$ is a cotilting module, it is pure-injective in $\Rep(AQ_n/I)$ by \cite{Stov} and it yields a factorisation of $f_J$ through $g_J$. The image of this factorisation under $\Psi$ shows that $C$ is pure-injective in $\D(A)$. 
Moreover, using that ${}^{\perp_{>0}}C$ is closed under products (\cite[Proposition 2.12]{WeiZhang}), it follows from \cite[Lemma 4.8]{AMV3} that the subcategory ${}^{\perp_{>0}}C$ is definable and that the pair $({}^{\perp_0}({}^{\perp_{>0}}C),{}^{\perp_{>0}}C)$ is a t-structure in $\D(A)$. Now, the same arguments as in the proof of (1) $\Rightarrow$ (2) in \cite[Theorem 4.9]{AMV3} show that ${}^{\perp_0}({}^{\perp_{>0}}C)={}^{\perp_{\leq 0}}C$. Finally, it only remains to refer to \cite[Theorem 3.6]{AMV3} to conclude that the heart $\Hcal$ of this t-structure is a Grothendieck abelian category.
\end{proof}

\begin{remark}
\begin{enumerate}
\item In \cite[page 56]{BR}, cotilting objects in a compactly generated triangulated category are required by definition to be pure-injective. 
The proposition above shows that for any cotilting complex in $\D(A)$ such an assumption is superfluous.
\item The approximation triangles for the t-structure $({}^{\perp_{\leq 0}}C,{}^{\perp_{>0}}C)$ coming from a cosilting complex $C$ can be constructed explicitly. Indeed, considering successive left approximations to suitable shifts of $\mathsf{Prod}(C)$, we can build an inverse system, the Milnor limit of which is an object of ${}^{\perp_{\leq 0}}C$. Moreover, the cone of the resulting map lies in the smallest cosuspended and product-closed subcategory of $\D(A)$ containing $C$, which turns out to be ${}^{\perp_{>0}}C$. This argument was explained to us by Jan \v{S}\v{t}ovi\v{c}ek.
\end{enumerate}
\end{remark}

It is a well-known fact from tilting theory that cotilting classes in a module category are not always of cofinite type, i.e. they are generally not determined by a set of compact modules (see \cite{Ba}). The following example shows that cosilting t-structures in the derived category of a ring are not always compactly generated. Note that this example provides an answer to \cite[Question 3.5]{BP}.

\begin{example}
We build on \cite[Example 5.4]{AH}. Let $A$ be a commutative local ring whose maximal ideal $\mathfrak{m}$ is non-zero and idempotent. Then $A/\mathfrak{m}$ is a cosilting $A$-module yielding a torsion pair $(\Tcal,\Fcal:=\Add(A/\mathfrak{m}))$ in $A\Mod$. Let $C$ be the $2$-cosilting complex corresponding to the cosilting module $A/\mathfrak{m}$, that is, $C$ is obtained from $A/\mathfrak{m}$ by passing to a suitable injective copresentation, as discussed in \cite{AH}. The associated cosilting t-structure $({}^{\perp_{\leq 0}}C,{}^{\perp_{>0}}C)$ in $\D(A)$ arises as an HRS-tilt of the torsion pair $(\Tcal,\Fcal)$. By \cite[Theorem 2.3]{BP}, it follows that this t-structure is compactly generated if and only if $\Fcal$ is the Hom-orthogonal class to a set of finitely presented $A$-modules. But the latter would imply, following \cite[Lemma 3.7 and Lemma 4.2]{AH}, that $(\Tcal,\Fcal)$ is a hereditary torsion pair, which clearly is not the case ($\Fcal$ is not closed under injective envelopes).
\end{example}

The following result is  the expected dual statement of Theorem \ref{bij AMV1} for cosilting complexes. It can now be obtained from Proposition \ref{cor pure} together with recent results in \cite{Bondarko} and \cite{WeiZhang}.

\begin{theorem}\label{dual TTF}
The assignment sending a cosilting complex $C$ in $\D(A)$ to its cosilting class ${}^{\perp_{>0}}C$ yields a bijection between cosilting complexes up to equivalence and co-intermediate cosuspended TTF classes.
\end{theorem}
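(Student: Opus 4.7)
The plan is to treat this as the precise dual of Theorem \ref{bij AMV1}, combining Proposition \ref{cor pure} (which produces the associated t-structure and supplies pure-injectivity) with the cosilting co-t-structure construction from \cite{WeiZhang} and Bondarko's structure theory of co-t-structures in \cite{Bondarko}. For the well-definedness of the assignment, I would first note that ${}^{\perp_{>0}}C$ is cosuspended directly from the shift identity $\Hom_{\D(A)}(V[-1], C[i]) = \Hom_{\D(A)}(V, C[i+1])$. Proposition \ref{cor pure} already identifies ${}^{\perp_{>0}}C$ as the coaisle of the t-structure $({}^{\perp_{\leq 0}}C, {}^{\perp_{>0}}C)$, so what remains is to produce a right adjacent co-t-structure $({}^{\perp_{>0}}C, \Wcal)$, which is the content of the cosilting analogue of \cite[Example 2.9]{AMV1} established in \cite{WeiZhang}. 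Co-intermediacy then follows by reducing, via Theorem \ref{silting is tilting}, to an $n$-cosilting complex lying in $\Psi(\Rep_\mathscr{I}(AQ_n/I))$, whose bounded range of non-zero components forces $\D^{\geq 1} \subseteq {}^{\perp_{>0}}C \subseteq \D^{\geq -n+1}$.

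The injectivity of the assignment is available directly from \cite[Theorem 2.17]{WeiZhang}: if ${}^{\perp_{>0}}C = {}^{\perp_{>0}}C'$, then $\Prod(C) = \Prod(C')$. For surjectivity, starting from a co-intermediate cosuspended TTF class $\Vcal$ with adjacent pairs $(\Ucal, \Vcal)$ (a t-structure) and $(\Vcal, \Wcal)$ (a co-t-structure), I plan to reconstruct a cosilting complex dually to the silting construction in the proof of Theorem \ref{bij AMV1}: the co-heart $\Vcal \cap \Wcal[1]$, exploited through Bondarko's framework for co-t-structures, should yield the required injective cogenerator-like object, and co-intermediacy ensures that this complex is bounded and hence lies in $\Psi(\Rep_\mathscr{I}(AQ_n/I))$ for some $n$. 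One then verifies the axioms $\Hom_{\D(A)}(C^J, C[i]) = 0$ for all $i > 0$ and all sets $J$, and $\thick(\Prod(C)) = \K^b(A\Inj)$, using the orthogonality relations encoded in the adjacency of $(\Ucal, \Vcal, \Wcal)$.

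The main obstacle I anticipate is this surjectivity step. Unlike the silting case, one cannot invoke Brown representability since a co-t-structure need not be compactly generated (as illustrated by the example immediately preceding the theorem). Here Proposition \ref{cor pure} is the crucial input: it guarantees that any cosilting candidate is pure-injective, and pure-injectivity is exactly what controls the convergence of the homotopy limits appearing in the dual reconstruction and permits matching the orthogonality classes on the nose. Once the existence of the cosilting complex $C$ is secured, the identification ${}^{\perp_{>0}}C = \Vcal$ reduces to routine triangulated manipulations with the adjacency relations between $(\Ucal, \Vcal)$ and $(\Vcal, \Wcal)$ together with the orthogonality provided by the TTF structure.
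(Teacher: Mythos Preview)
Your overall scaffolding (well-definedness, injectivity, surjectivity) matches the paper's, but you have inverted the roles of \cite{WeiZhang} and \cite{Bondarko}, and this inversion creates a genuine gap.

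For well-definedness, the hard step is producing the right adjacent co-t-structure $({}^{\perp_{>0}}C,\Wcal)$. You attribute this to \cite{WeiZhang}, but that reference does not supply it; indeed the introduction of the present paper announces the existence of the cosilting TTF triple as new. The paper obtains the co-t-structure precisely via \cite[Corollary 3.2.6]{Bondarko}, and the input to Bondarko's criterion is exactly Brown representability (for $\D(A)$, realised as the smallest colocalising subcategory containing ${}^{\perp_{>0}}C$ via Milnor limits of standard truncations) together with the heart having enough injectives (from Proposition~\ref{cor pure}). So your assertion that ``one cannot invoke Brown representability'' is the opposite of what happens: Brown representability is the engine behind the crucial step, and your dismissal of it leaves you without a proof that ${}^{\perp_{>0}}C$ is a TTF class at all.

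Conversely, for surjectivity you propose a hands-on reconstruction of $C$ from the co-heart of $(\Vcal,\Wcal)$ via Bondarko's framework, worrying about convergence of homotopy limits and pure-injectivity. The paper bypasses all of this: once $\Vcal$ is a co-intermediate cosuspended TTF class, it already satisfies the intrinsic characterisation of cosilting classes in \cite[Theorem~2.17]{WeiZhang} (by arguments dual to \cite[Theorem~4.6]{AMV1}), which hands you the cosilting complex directly. Your proposed route may be workable, but as written it is vague at the key point (why the co-heart object satisfies $\thick(\Prod(C))=\K^b(A\Inj)$, and why pure-injectivity controls the relevant limits), and it is unnecessary given the available citation.

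In summary: move Bondarko's result to the well-definedness step, embrace Brown representability there, and for surjectivity simply invoke \cite[Theorem~2.17]{WeiZhang}.
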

\begin{proof}
First, observe that the assignment is well-defined. Indeed, the cosilting class ${}^{\perp_{> 0}}C$ depends only on the equivalence class of $C$, since we have ${}^{\perp_{> 0}}C={}^{\perp_{> 0}}\mathsf{Prod}(C)$. Moreover, it is clear that, since $C$ is a bounded complex of injective $A$-modules, there is an integer $m$ such that $\D^{\geq m}\subseteq {}^{\perp_{>0}}C$. It then follows from \cite[Proposition 2.10]{WeiZhang} that cosilting classes are co-intermediate. We need to check that the t-structure $({}^{\perp_{\leq 0}}C, {}^{\perp_{> 0}}C)$ admits a right adjacent co-t-structure. For this purpose, we use \cite[Corollary 3.2.6]{Bondarko}. Recall that every object $X$ in $\D(A)$ can be built as a Milnor limit of its standard truncations 
(see \cite[Remark 2.3]{BN}). Hence, it follows that the smallest colocalising subcategory of $\D(A)$ containing ${}^{\perp_{>0}}C$ is $\D(A)$ itself, which is well known to satisfy Brown representability. 
This condition, together with the fact that the heart of $({}^{\perp_{\leq 0}}C, {}^{\perp_{> 0}}C)$ has enough injectives by Proposition \ref{cor pure}, yields the existence of a right adjacent co-t-structure by \cite[Corollary 3.2.6]{Bondarko}.

Now, since every co-intermediate cosuspended TTF class satisfies the characterisation of cosilting classes in \cite[Theorem 2.17]{WeiZhang}, it follows that the assignment is surjective (the arguments are dual to those in the proof of \cite[Theorem 4.6]{AMV1}). Finally, note that the assignment is injective, since the cosilting class determines the cosilting complex up to equivalence. 
\end{proof}

Finally, we can now prove the second half of our main theorem, as stated in the introduction.

\begin{theorem}\label{thm cosilting class}
A subcategory $\Vcal$ of $\D(A)$ is a cosilting class if and only if $\Vcal$ is cosuspended, co-intermediate and definable.
\end{theorem}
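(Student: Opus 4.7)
The forward direction is largely already in place. If $\Vcal={}^{\perp_{>0}}C$ for a cosilting complex $C$, then by Proposition~\ref{cor pure} the class $\Vcal$ is the coaisle of a t-structure (hence cosuspended) and is definable, and by Theorem~\ref{dual TTF} it is co-intermediate.

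For the converse, suppose that $\Vcal$ is cosuspended, co-intermediate and definable. My plan is to show that $\Vcal$ is a co-intermediate cosuspended TTF class, whereupon Theorem~\ref{dual TTF} supplies a cosilting complex $C$ with $\Vcal={}^{\perp_{>0}}C$. The first step is to upgrade $\Vcal$ to the coaisle of a t-structure. Since definable subcategories are closed under products (and under directed homotopy colimits), and $\Vcal$ is cosuspended, the same application of \cite[Lemma 4.8]{AMV3} used in the proof of Proposition~\ref{cor pure} yields the t-structure $({}^{\perp_0}\Vcal,\Vcal)$.

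The second step is to identify the heart $\Hcal$ of this t-structure as a Grothendieck abelian category, for which I would invoke \cite[Theorem 3.6]{AMV3}; its hypotheses are secured by the closure properties packaged into definability together with the t-structure just produced. In particular, $\Hcal$ has enough injectives. To obtain the right adjacent co-t-structure, I mirror the argument of Theorem~\ref{dual TTF}: by co-intermediateness we have $\D^{\geq m}\subseteq\Vcal$ for some $m$, and since every object of $\D(A)$ is a homotopy limit of its standard truncations, the smallest colocalising subcategory of $\D(A)$ containing $\Vcal$ is all of $\D(A)$, which satisfies Brown representability. Combined with enough injectives in $\Hcal$, \cite[Corollary 3.2.6]{Bondarko} delivers the required co-t-structure, so $\Vcal$ is a TTF class; being cosuspended and co-intermediate, Theorem~\ref{dual TTF} finishes the job.

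The main obstacle I anticipate is bookkeeping rather than any new idea: verifying that the closure properties encapsulated in ``definable'' are exactly what is needed for \cite[Lemma 4.8]{AMV3} and \cite[Theorem 3.6]{AMV3} to apply at this level of generality. In Proposition~\ref{cor pure} those properties were deduced from the more restrictive input of a cosilting complex via pure-injectivity, and one must confirm that here definability alone, together with the cosuspended and co-intermediate hypotheses, supplies the same structural features needed to reproduce the t-structure, its Grothendieck heart, and the adjacent co-t-structure.
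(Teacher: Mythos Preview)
Your approach is essentially the paper's: produce the t-structure from definability, then invoke \cite[Corollary 3.2.6]{Bondarko} (using co-intermediateness for the colocalising condition and enough injectives in the heart) to obtain the adjacent co-t-structure, and conclude via Theorem~\ref{dual TTF}. The only adjustment is that the clean reference for the t-structure step is \cite[Proposition 4.5]{AMV3} rather than \cite[Lemma 4.8]{AMV3}---the latter, as invoked in Proposition~\ref{cor pure}, takes pure-injectivity of a specific object as its hypothesis, whereas Proposition~4.5 applies directly to a cosuspended definable subcategory; with that substitution your bookkeeping worry dissolves, and the paper is in fact terser than you, not bothering to restate the Grothendieck-heart verification.
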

\begin{proof}
By combining Proposition \ref{cor pure} with Theorem \ref{dual TTF}, it only remains to show that if $\Vcal$ is cosuspended, co-intermediate and definable, then it is a TTF class. First, it follows from \cite[Proposition 4.5]{AMV3} that $({}^{\perp_0}\Vcal,\Vcal)$ is a t-structure. Moreover, since $\Vcal$ is co-intermediate, it contains some shift of the standard coaisle (i.e. there is some integer $m$ such that $\D^{\geq m}\subseteq \Vcal$). Hence, it follows again from \cite[Corollary 3.2.6]{Bondarko}, as argued in Theorem \ref{dual TTF}, that $(\Vcal,\Vcal^{\perp_0})$ is a co-t-structure.
\end{proof}

%%%%%%%%%%%%%%%%%%%%%%%%%%%%%%%%%%%%%%%%%%%%%%%%%%%%%%%%%%%%%%%%%%%%%%%


\begin{thebibliography}{99}

\bibitem{AI} {\sc T. Aihara} and {\sc O. Iyama}, {\em Silting mutation in triangulated categories}, J. Lond. Math. Soc. (2) \textbf{85} (2012), no. 3, 633--668.

\bibitem{A}
\textsc{L.~Angeleri H\"ugel}, {\em On the abundance of silting modules}, preprint (2016), available at \url{http://profs.sci.univr.it/~angeleri/abundance.pdf}.

\bibitem{AC}
\textsc{L. Angeleri H\"ugel} and \textsc{F.U.~Coelho}, {\em Infinitely generated complements to partial tilting modules}, Math. Proc. Camb. Phil. Soc. \textbf{132} (2002), 89--96.

\bibitem{AHT}
\textsc{L. Angeleri H\"ugel, D. Herbera} and \textsc{J. Trlifaj}, {\em Tilting modules and Gorenstein rings}, Forum Math. \textbf{18} (2006), no. 2,  211--229.

\bibitem{AH}
\textsc{L. Angeleri H\"ugel} and \textsc{M.~Hrbek}, {\em Silting modules over commutative rings}, to appear in Int. Math. Res. Not., doi:10.1093/imrn/rnw147.

\bibitem{AMV1}
\textsc{L.~Angeleri H\"ugel}, \textsc{F.~Marks} and \textsc{J.~Vit\'oria}, {\em Silting modules}, Int. Math. Res. Not. 2016, no. 4, 1251--1284.

\bibitem{AMV3}
\textsc{L.~Angeleri H\"ugel}, \textsc{F.~Marks} and \textsc{J.~Vit\'oria}, {\em Torsion pairs in silting theory}, to appear in Pacific J. Math., arXiv:1611.08139.

\bibitem{APV} 
\textsc{L.~Angeleri H\"ugel, D. Pauksztello} and \textsc{J. Vit\'oria}, {\em Complements for partial silting objects}, in preparation.

\bibitem{Ba0}
\textsc{S.~Bazzoni}, {\em Cotilting modules are pure-injective},  Proc. Amer. Math. Soc. \textbf{131} (2003), no. 12, 3665--3672.

\bibitem{Ba}
\textsc{S.~Bazzoni}, {\em Cotilting and tilting modules over Pr\"ufer domains},  Forum Math. \textbf{19} (2007), no. 6, 1005--1027.

\bibitem{Ba1}
\textsc{S.~Bazzoni}, {\em When are definable classes tilting and cotilting classes?},  J. Algebra \textbf{320} (2008), no. 12, 4281--4299. 

\bibitem{BS}
\textsc{S.~Bazzoni} and \textsc{J.~\v{S}\v{t}ovi\v{c}ek}, {\em All tilting modules are of finite type}, Proc. Amer. Math. Soc. \textbf{135} (2007), no. 12, 3771--3781.

\bibitem{Bel} 
{\sc A. Beligiannis}, \emph{Relative Homological Algebra and Purity in Triangulated Categories}, J. Algebra \textbf{227} (2000), 268--361.

\bibitem{BR}
\textsc{A.~Beligiannis} and \textsc{I.~Reiten}, {\em Homological and homotopical aspects of torsion theories}, Mem. Amer. Math. Soc. {\bf 188} (2007), no. 883.

\bibitem{BN}
\textsc{M. B\"okstedt} and \textsc{A. Neeman}, {\em Homotopy limits in triangulated categories}, Compos. Math. {\bf 86} (1993), 209--234.
 
\bibitem{Bondarko}
\textsc{M.V.~Bondarko}, {\em On torsion pairs, (well generated) weight structures, adjacent t-structures, and related (co)homological functors }, preprint (2016), arXiv:1611.00754.

\bibitem{BP}
\textsc{D. Bravo} and \textsc{C. Parra}, {\em tCG torsion pairs}, preprint (2016), arXiv:1610.06509.

\bibitem{GT}
\textsc{R. G\"obel} and \textsc{J. Trlifaj}, {\em Approximations and endomorphism algebras of modules}, De Gruyter Expositions in Mathematics, 41. Walter de Gruyter GmbH \& Co. KG, Berlin, 2006.

\bibitem{Happel}
\textsc{D.~Happel}, {\em Triangulated categories in the representation theory of finite dimensional algebras},  London Mathematical Society Lecture Note Series, 119. Cambridge University Press, Cambridge, 1988.

\bibitem{JL}
\textsc{C.U. Jensen} and \textsc{H. Lenzing}, {\em Model theoretic algebra}, Gordon and Breach, New York, 1989.

\bibitem{KY} 
{\sc S. Koenig} and {\sc D. Yang}, \emph{Silting objects, simple-minded collections, t-structures and co-t-structures for finite-dimensional algebras}, Doc. Math. \textbf{19} (2014), 403--438.

\bibitem{Kr1} 
{\sc H. Krause}, {\em Smashing subcategories and the telescope conjecture-an algebraic approach}, Invent. Math. \textbf{139} (2000), no. 1, 99--133.

\bibitem{MaSt}
\textsc{F.~Marks} and \textsc{J.~\v{S}\v{t}ovi\v{c}ek}, {\em Universal localisations via silting}, to appear in Proc. Roy. Soc. Edinburgh Sect. A, arXiv:1605.04222.

\bibitem{PS}
\textsc{D.~Posp\'i\v{s}il} and \textsc{J.~\v{S}\v{t}ovi\v{c}ek}, \emph{On compactly generated torsion pairs and the classification of co-t-structures for commutative noetherian rings},  Trans. Amer. Math. Soc. \textbf{368} (2016), no. 9, 6325--6361.

\bibitem{Stov}
\textsc{J.~\v{S}\v{t}ovi\v{c}ek}, {\em All n-cotilting modules are pure-injective}, Proc. Amer. Math. Soc. \textbf{134} (2006), no. 7, 1891--1897.

\bibitem{Wei0} 
{\sc J. Wei}, \emph{Semi-tilting complexes}, Israel J. Math.  \textbf{194} (2013), no. 2, 871--893.

\bibitem{WeiZhang} 
{\sc J. Wei} and {\sc P. Zhang}, {\em  Cosilting complexes and AIR-cotilting modules}, preprint (2016), arXiv:1601.01385.

\bibitem{Weibel} 
\textsc{C.A.~Weibel}, {\em An introduction to homological algebra}, Cambridge Studies in Advanced Mathematics, 38. Cambridge University Press, Cambridge, 1994.

\end{thebibliography}
\end{document}